\def\thesection{\arabic{section}}
\def\theequation{\thesection.\arabic{equation}}
\newcommand{\ra} {\rightarrow}
\newcommand{\De} {\Delta}
\newcommand{\la} {\lambda}
\newcommand{\noi} {\noindent}
\newcommand{\na} {\nabla}
\newcommand{\mb} {\mathbb}
\newcommand{\I}{\int\limits_}
\def\theequation{\@arabic{\c@section}.\@arabic{\c@equation}}
\def\QED{\hfill {$\square$}\goodbreak \medskip}
\newtheorem{Theorem}{Theorem}[section]
\newtheorem{Lemma}[Theorem]{Lemma}
\newtheorem{Proposition}[Theorem]{Proposition}
\newtheorem{Corollary}[Theorem]{Corollary}
\newtheorem{Remark}[Theorem]{Remark}
\newtheorem{Definition}[Theorem]{Definition}
\def\XXint#1#2#3{{\setbox0=\hbox{$#1{#2#3}{\int}$ }
		\vcenter{\hbox{$#2#3$ }}\kern-.6\wd0}}
\markboth{\small } {\small Pohozaev identity and applications}
\def\theequation{\@arabic{\c@section}.\@arabic{\c@equation}}
\begin{document}

{\vspace{0.01in}
		\title
{Pohozaev-type identities for classes of quasilinear elliptic local and nonlocal equations and systems, with applications}
		\author{ {\bf Gurdev Chand Anthal\footnote{School of Mathematical Sciences, Zhejiang Normal University, Jinhua 321004, People's Republic of China, e-mail: Gurdevanthal92@gmail.com}\; and Prashanta Garain\footnote{Indian Institute of Science Education and Research Berhampur, Berhampur, Ganjam, Odisha, India-760010
					email: pgarain92@gmail.com}  }}}
	\date{}

\maketitle

\begin{abstract}\noindent
In this article, we establish Pohozaev-type identities for a class of quasilinear elliptic equations and systems involving both local and nonlocal $p$-Laplace operators. Specifically, we obtain these identities in $\mb R^n$ for the purely anisotropic $p$-Laplace equations, the purely fractional $p$-Laplace equations, as well as for equations that incorporate both anisotropic and fractional $p$-Laplace features. We also extend these results to the corresponding systems. To the best of our knowledge, the identities we derive in the mixed case are new even when $p=2$. Finally, we illustrate some of the applications of our main results.
\end{abstract}

\maketitle

\noi {Keywords: Pohozaev identity, anisotropic $p$-Laplace equation, fractional $p$-Laplace equation, mixed local and nonlocal $p$-Laplace equation, system of quasilinear equations, non-existence.}

\noi{\textit{2020 Mathematics Subject Classification: 35R11, 35J92, 35A01, 35J62.}

\bigskip

\tableofcontents
\section{Introduction}
In this article, our primary objective is to derive a Pohozaev-type identity for the following class of quasilinear equations:
\begin{equation}\label{meqn}
\mathcal{M}_{\alpha,\beta}\,u:=-\alpha H_p u+\beta (-\Delta_p)^s u=f(u)\text{ in }\mathbb{R}^n,
\end{equation}
and for the following system of equations:
\begin{equation}\label{msys}
\begin{split}
-\alpha H_p u+\beta (-\Delta_p)^s u&=g_u(u,v)\text{ in }\mathbb{R}^n\\
-\alpha H_p u+\beta(-\Delta_p)^s u&=g_v(u,v)\text{ in }\mathbb{R}^n.
\end{split}
\end{equation}
Here, the parameters satisfy $0<s<1<p<\infty$ and $
(\alpha,\beta)\in S$, where
\begin{equation}\label{S}
S=\{(1,0), (1,\gamma), (0,1)\},
\end{equation} 
where $\gamma>0$. The nonlinearity $f:\mathbb{R}\to\mathbb{R}$ is a continuous function subject to the condition:
$$
(f_1)~~~~~~~~|f(t)| \leq C|t|^{p-1},~t \in \mb R,
$$
for some constant $C>0$. Here $g:\mathbb{R}^2\to\mathbb{R}$ is a continuously differentiable function subject to the condition
$$
(g_1)~~~~~~~~|g_t(t,s)| + |g_s(t,s)| \leq C (|t|^{p-1}+|s|^{p-1}),~(t,s) \in \mb R^2,
$$
for some constant $C>0$.

The operator $H_p$ denotes the anisotropic $p$-Laplace operator, defined as
$$
H_p u=\text{div}(H(\nabla u)^{p-1}\nabla_{\xi}H(\nabla u)),\quad 1<p<\infty,
$$
where $H:\mathbb{R}^n\to[0,\infty)$ is a Finsler-Minkowski norm. That is $H$ is a strictly convex function belonging to $C^1(\mathbb{R}^n\setminus\{0\})$, satisfying the following conditions:
\begin{enumerate}
\item[(H1)] $H(x)=0$ iff $x=0$.

    \item[(H2)] $H(tx)=|t|H(x)$ for every $x\in\mathbb{R}^n$, $t\in\mathbb{R}$.
    
    \item[(H3)] There exist constants $C_1,C_2>0$ such that $C_1|x|\leq H(x)\leq C_2|x|$ for every $x\in\mathbb{R}^n$.
\end{enumerate}
The symbol $\nabla_\xi H(\na u)$ represents the gradient of $H(\nabla u)$ with respect to the $\xi$ variable, and $\nabla u(x)$ denotes the gradient of $u$ with respect to the $x$ variable. 
We present some examples now to provide a little more insight into the anisotropic p-Laplace operator; refer to \cite{BFKzamp, Xiathesis, MV} and the references therein.\\
\noi \textbf{Examples:} Let $x=(x_1,x_2,\ldots,x_n)\in\mathbb{R}^n$,
\begin{enumerate}
\item[(i)] then, for $q>1$, we define 
\begin{equation}\label{ex11}
H_q(x):=\Big(\sum_{i=1}^{n}|x_i|^q\Big)^\frac{1}{q},
\end{equation}
\item[(ii)] for $\lambda,\mu>0$, we define
\begin{equation}\label{ex2}
H_{\lambda,\mu}(x):=\sqrt{\lambda\sqrt{\sum_{i=1}^{N}x_i^{4}}+\mu\sum_{i=1}^{n}x_i^{2}}\,.
\end{equation}
\end{enumerate}
It can be verified from \cite{MV} that the functions $H_q, H_{\lambda,\mu}:\mathbb{R}^n\to[0,\infty)$, defined in \eqref{ex11} and \eqref{ex2}, respectively, satisfy the properties of Finsler-Minkowski norms.
\begin{Remark}\label{exrmk1}
For $i=1,2$ if $\lambda_i,\mu_i$ are positive real numbers such that $\frac{\lambda_1}{\mu_1}\neq\frac{\lambda_2}{\mu_2}$, then $H_{\lambda_1,\mu_1}$ and $H_{\lambda_2,\mu_2}$ given by \eqref{ex2} define two non-isometric norms in $\mathbb{R}^n$, see \cite{MV}.
\end{Remark}

\begin{Remark}\label{exrmk2}
Moreover, for $H=H_q$ given by \eqref{ex11}, we have
\begin{equation*}\label{ex}
H_{p} u=\sum_{i=1}^{n}\frac{\partial}{\partial x_i}\bigg(\Big(\sum_{k=1}^{n}\Big|\frac{\partial u}{\partial x_k}\Big|^{q}\Big)^\frac{p-q}{q}\Big|\frac{\partial u}{\partial x_i}\Big|^{q-2}\frac{\partial u}{\partial x_i}\bigg).
\end{equation*}
Therefore, $H_p$ become the $p$-Laplace operator $\Delta_p$, when $q=2$ and pseudo $p$-Laplace operator $S_p$, when $q=p$ as given below: 
\begin{equation*}\label{plap}
H_{p} u=
\begin{cases}
\Delta_p u:=\text{div}(|\nabla u|^{p-2}\nabla u),\,\text{if }q=2,\,1<p<\infty,\\
S_p u:=\sum_{i=1}^{n}\frac{\partial}{\partial x_i}\Big(|u_i|^{p-2}u_i\Big),\,\text{if }q=p\in(1,\infty),
\end{cases}
\end{equation*}
where $u_i:=\frac{\partial u}{\partial x_i}$, for $i=1,2,\ldots,n$.
\end{Remark}
In particular, the examples above indicate that the operator $H_p$ generalizes the following form of anisotropic quasilinear operator:
$$
\sum_{i=1}^{n}\frac{\partial}{\partial x_i}\bigg(\Big(\sum_{k=1}^{N}\Big|\frac{\partial u}{\partial x_k}\Big|^{q}\Big)^\frac{p-q}{q}\Big|\frac{\partial u}{\partial x_i}\Big|^{q-2}\frac{\partial u}{\partial x_i}\bigg).
$$
Furthermore, for $1<p<\infty$ and $0<s<1$, the fractional $p$-Laplace operator $(-\Delta_p)^s u$ is defined by
 \begin{align*}
     (-\De)_p^su(x) =\text{P.V.} \I{\mb R^n} \frac{|u(x)-u(y)|^{p-2}(u(x)-u(y)}{|x-y|^{n+ps}}\,dxdy,\quad x\in\mathbb{R}^n
 \end{align*}
where P.V. stands for the Cauchy principal value. Therefore, the equation \eqref{meqn} and the system \eqref{msys} encompass a broad class of quasilinear problems, depending on the values of the parameters $\alpha$ and $\beta$. Specifically, we have:
$$
\mathcal{M}_{1,0}\,=-H_p,\quad \mathcal{M}_{1,\gamma}\,=-H_p+\gamma(-\Delta_p)^s\text{ and } \mathcal{M}_{0,1}\,=(-\Delta_p)^s,
$$
for $\gamma>0$.

The classical Pohozaev identity, established in the seminal work of Pohozaev \cite{Pohozaev}, asserts that any classical solution $u\in C^2(\Omega)\cap C^1(\overline{\Omega})$ of the boundary value problem
\begin{equation*}
-\Delta u=g(u)\text{ in }\Omega,\quad u=0\text{ on }\partial\Omega
\end{equation*}
satisfies the following identity
\begin{equation*}
(2-n)\int_{\Omega}u g(u)\,dx+2n\int_{\Omega}G(u)\,dx=\int_{\partial\Omega}|\nabla u|^2\,x\cdot \eta\,dx,
\end{equation*}
where $\Omega\subset\mathbb{R}^n$ is a bounded smooth domain, $g$ is a continuous function on $\mathbb{R}$ such that $G'=g$,\,$G(0)=0$ and $\eta$ is the outward unit normal vector to $\partial\Omega$. This identity was later extended in \cite{N-N-S} to the case where $g=g(x,u)$. Based on Pohozaev's identity, nonexistence results for semilinear Dirichlet problems in star-shaped domains were obtained in \cite{B-N, Pohozaev}.

In the quasilinear setting, similar identities were derived in \cite{P-S} for $C^2$ solutions, including extensions to systems, higher-order equations, and both bounded and unbounded domains. For $W_0^{1,p}(\Omega)\cap L^\infty(\Omega)$ solutions, we refer to \cite{G-V}, and for $C^1$ solutions, see \cite{M-A-Squa}, which generalizes the results in \cite{P-S}. Additionally, in \cite{LLjmaa}, a Pohozaev identity was established for $p$-Laplace equations with solutions in $W^{1,p}(\mb R^n)$.

Moreover, in \cite{Med}, a Pohozaev-type identity was obtained for the following system posed on a bounded domain $\Omega\subset\mb R^n$:
\begin{equation*}
\begin{split}
-\Delta u &= f(u, v) \quad \text{in } \Omega, \\
-\Delta v &= g(u, v) \quad \text{in } \Omega,
\end{split}
\end{equation*}
for suitable nonlinearities $f$ and $g$. Extensions of such identities to systems involving the $p$-Laplacian can be found in \cite{Korman} and the references therein.

More recently, the Pohozaev identity has also been investigated for the anisotropic $p$-Laplace equation. In particular, we refer to \cite{WXia}, where the identity is established for $C^2$ solutions in bounded domains. Additionally, in \cite{Sciani}, the authors derive the identity for $C^1$ solutions, both in bounded domains and in the whole space $\mb R^n$. Notably, the result in $\mb R^n$ is obtained by extending the identity derived in bounded domains.

In this article, one of our main objectives is to establish a Pohozaev-type identity for the anisotropic $p$-Laplace equation and its associated systems without assuming $C^1$ regularity. Specifically, we work within the framework of $W^{1,p}(\mb R^n)$ solutions.

In the purely nonlocal setting, Pohozaev-type identities for the fractional Laplace equation have been established in bounded domains in \cite{Amb21, Weth, Oton1, Oton2}, and in the whole space $\mb R^n$ in \cite{Chang}. For systems involving fractional Laplace equations, we refer to \cite{MFL}. In the case of the fractional $p$-Laplace equation on $\mb R^n$, a Pohozaev-type identity was recently derived in \cite{Ambrosio}.
For problems involving both local and nonlocal operators—the so-called mixed case—the only known results for Pohozaev-type identities in bounded domains appear in \cite{AGS, Q, biswas2024pohozaevidentitymixedlocalnonlocal}.

In this work, we aim to establish Pohozaev-type identities for three distinct cases: the purely anisotropic $p$-Laplace equation, the purely fractional $p$-Laplace equation, and the mixed case involving both anisotropic and fractional $p$-Laplacians. To the best of our knowledge, our result is new even in the mixed case when $p=2$. Furthermore, we extend these identities to the corresponding systems of equations. Our approach provides a direct proof by employing an appropriately chosen test function involving difference quotient of the solutions. To this end, we follow the methodology developed in \cite{Ambrosio, LLjmaa}.

The remainder of the paper is organized as follows. In Section 2, we introduce the functional framework, present the main results and mention the notations. Section 3 is devoted to the proofs of these results. Finally, in Section 4, we discuss some applications of our main theorems.
\section{Functional setting, main results and notations}
\subsection{Functional setting}
\textbf{Sobolev spaces:} {Let $\Omega\subset\mathbb{R}^n$ be an open and connected subset and let $0<s<1<p<\infty$. The Sobolev space $W^{1,p}(\Omega)$ is defined to be the space of functions $u:\Omega\to\mathbb{R}$ in $L^p(\Omega)$ such that the partial derivatives $\frac{\partial u}{\partial x_i}$ for $1\leq i\leq n$ exist in the weak sense and belong to $L^p(\Omega)$. The space $W^{1,p}(\Omega)$ is a Banach space (see \cite{LC}) equipped with the norm:
$$
\|u\|_{W^{1,p}(\Omega)} =\left(\int_{\Omega}|u|^p\,dx+\int_{\Omega}H(\nabla u)^p\,dx\right)^\frac{1}{p}
$$
where $\nabla u=\Big(\frac{\partial u}{\partial x_1},\ldots,\frac{\partial u}{\partial x_n}\Big)$.  
}The fractional Sobolev space $W^{s,p}(\Omega)$ for $0<s<1<p<\infty$, is defined by
$$
W^{s,p}(\Omega)=\Bigg\{{u:\Omega\to\mathbb{R}:\,}u\in L^p(\Omega),\,\frac{|u(x)-u(y)|}{|x-y|^{\frac{n}{p}+s}}\in L^p(\Omega\times \Omega)\Bigg\}
$$
under the norm
$$
\|u\|_{W^{s,p}(\Omega)}=\left(\int_{\Omega}|u|^p\,dx+\int_{\Omega}\int_{\Omega}\frac{|u(x)-u(y)|^p}{|x-y|^{n+ps}}\,dx\,dy\right)^\frac{1}{p}.
$$
We refer to \cite{Hitchhikersguide} and the references therein for more details on fractional Sobolev spaces. Throughout the rest of the article, for $1<p<\infty$ and $0<s<1$, we denote by
\begin{align*}
 	\|H(\nabla u)\|_{L^p(\mb R^n)}^p=\int_{\mathbb{R}^n}H(\nabla u)^p\,dx\quad \forall u\in W^{1,p}(\mathbb{R}^n)
 \end{align*}
 and 
 \begin{align*}
 	[u]_{s,p}^p =\I{\mb R^n}\I{\mb R^n}\frac{|u(x)-u(y)|^p}{|x-y|^{n+ps}}\,dxdy\quad \forall u\in W^{s,p}(\mathbb{R}^n).
 \end{align*}
The following results are taken from \cite[Proposition 2.1]{PF20}, \cite[Proposition 1.2]{Xiathesis} and \cite[Lemma 5.9]{Heinonen}.
\begin{Lemma}\label{Happ}
For every $x\in\mathbb{R}^N\setminus\{0\}$ and $t\in\mathbb{R}\setminus\{0\}$, we have
\begin{enumerate}
\item[(A)] $x\cdot\nabla H(x)=H(x)$. 
\item[(B)] $\nabla H(tx)=\text{sign}(t)\nabla H(x)$.
\item[(C)] $|\nabla H(x)|\leq C$, for some fixed positive constant $C$.
\end{enumerate}
\end{Lemma}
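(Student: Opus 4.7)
The three properties are all standard consequences of the $1$-homogeneity of $H$ (property (H2)) combined with its $C^1$-regularity on $\R^n\setminus\{0\}$; my plan is to treat them in order, each one taking at most a few lines.

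For (A), the plan is to apply Euler's identity to a positively $1$-homogeneous function. From (H2) with $t>0$ we have the relation $H(tx)=tH(x)$; differentiating both sides in $t$ by the chain rule gives $x\cdot\nabla H(tx)=H(x)$, and then evaluating at $t=1$ produces exactly $x\cdot\nabla H(x)=H(x)$. The only subtlety is that $\nabla H$ must be evaluated at $tx\neq0$, which is guaranteed since we are working on $\R^n\setminus\{0\}$.

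For (B), I would start from (H2) written as $H(tx)=|t|H(x)$ and differentiate in the $x_i$ variable rather than in $t$. For $t>0$ the chain rule yields $t\,\partial_i H(tx)=t\,\partial_i H(x)$, hence $\nabla H(tx)=\nabla H(x)$; for $t<0$ the same computation gives $t\,\partial_i H(tx)=-t\,\partial_i H(x)$, hence $\nabla H(tx)=-\nabla H(x)$. Combining the two cases yields $\nabla H(tx)=\mathrm{sign}(t)\,\nabla H(x)$ for every $t\neq0$.

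Finally, for (C) the idea is to reduce the bound to a compact set via (B): for any $x\neq0$, write $x=|x|\cdot(x/|x|)$ and apply (B) with $t=|x|>0$ to obtain $\nabla H(x)=\nabla H(x/|x|)$. Thus $|\nabla H(x)|$ equals $|\nabla H(\omega)|$ for $\omega=x/|x|$ on the unit sphere $\mathbb{S}^{n-1}$. Since $H\in C^1(\R^n\setminus\{0\})$, the map $\omega\mapsto|\nabla H(\omega)|$ is continuous on the compact set $\mathbb{S}^{n-1}$ and therefore attains a finite maximum $C$, giving $|\nabla H(x)|\leq C$ for all $x\neq0$. I do not anticipate any real obstacle here; the only point worth stressing is that each step exploits the differentiability of $H$ only away from the origin, consistent with the hypothesis $H\in C^1(\R^n\setminus\{0\})$.
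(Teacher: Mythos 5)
Your proof is correct on all three counts: Euler's identity for (A), differentiation of the homogeneity relation in $x$ for (B), and reduction to the unit sphere via (B) plus compactness for (C). The paper itself does not prove this lemma but simply cites it from prior references (Farkas--Winkert, Xia's thesis, and Heinonen--Kilpel\"ainen--Martio), and your argument is precisely the standard proof one would find there.
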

\noi \textbf{Solution space:} For a given $(\alpha,\beta)\in S$ where $S$ is defined in \eqref{S}, we define the space $X_{\alpha,\beta}$ by
\begin{equation}\label{solsp}
X_{\alpha,\beta}=
\begin{cases}
W^{1,p}(\mathbb{R}^n)\text{ if } (\alpha,\beta)=(1,0).\\
W^{1,p}(\mathbb{R}^n)\cap W^{s,p}(\mathbb{R}^n)\cap C^{0,l}(\mathbb{R}^n)\text{ for some } l>s, \text{ if } (\alpha,\beta)=(1,\gamma),\text{ where }\gamma>0.\\
\Big(W^{s,p}(\mathbb{R}^n)\cap C^{1,1}(\mathbb{R}^n)\Big)\cup \Big(W^{1,p}(\mathbb{R}^n)\cap W^{s,p}(\mathbb{R}^n)\Big)\text{ if } (\alpha,\beta)=(0,1).
\end{cases}
\end{equation}
\textbf{Weak solution:}
Before stating our main results, we define the notion of weak solutions for the equation \eqref{meqn} and system \eqref{msys}.
\begin{Definition}\label{wksolequ}
Let $0<s<1<p<\infty$. Suppose $(\alpha,\beta)\in S$, where $S$  is defined in \eqref{S} and $f:\mb R\to\mb R$ is a continuous function. Then we say that $u\in X_{\alpha,\beta}$ defined in \eqref{solsp} is a weak solution of the equation \eqref{meqn}, if for every $\phi\in C_c^{1}(\mathbb{R}^n)$, we have
\begin{equation}\label{wksoldef}
\begin{split}
&\alpha\int_{\mathbb{R}^n}H(\nabla u)^{p-1}\nabla_{\xi}H(\nabla u)\cdot \na \phi\,dx+\beta\int_{\mathbb{R}^n}\int_{\mathbb{R}^n}J_p(u(x)-u(y))(\phi(x)-\phi(y))\,d\mu_{s,p}=\int_{\mathbb{R}^n}f(u)\phi\,dx,
\end{split}
\end{equation}
where $d\mu_{s,p}$ and $J_p$ are defined in \eqref{mu} and \eqref{jp} respectively.
\end{Definition}

\begin{Definition}\label{wksolsys}
Let $0<s<1<p<\infty$. Suppose $(\alpha,\beta)\in S$, where $S$ is defined in \eqref{S} and $g: \mb R^2 \rightarrow \mb R$ is a continuously differentiable function. Then we say that $(u,v)\in X_{\alpha,\beta}\times X_{\alpha,\beta}$ (where $X_{\alpha,\beta}$ is defined in \eqref{solsp}) is a weak solution of the system of equations \eqref{msys}, if for every $\phi,\psi\in C_c^{1}(\mathbb{R}^n)$, the following two equations hold simultaneously:
\begin{equation}\label{wksysdef1}
\begin{split}
\alpha\int_{\mathbb{R}^n}&H(\nabla u)^{p-1}\nabla_{\xi}H(\nabla u)\cdot\na\phi\,dx+\beta\int_{\mathbb{R}^n}\int_{\mathbb{R}^n}J_p(u(x)-u(y))(\phi(x)-\phi(y))\,d\mu_{s,p}\\
&=\int_{\mathbb{R}^n}g_u(u,v)\phi\,dx,
\end{split}
\end{equation}
and
\begin{equation}\label{wksysdef2}
\begin{split}
\alpha\int_{\mathbb{R}^n}&H(\nabla v)^{p-1}\nabla_{\xi}H(\nabla v)\cdot\na\psi\,dx+\beta\int_{\mathbb{R}^n}\int_{\mathbb{R}^n}J_p(v(x)-v(y))(\psi(x)-\psi(y))\,d\mu_{s,p}\\
&=\int_{\mathbb{R}^n}g_v(u,v)\psi\,dx,
\end{split}
\end{equation}
where $d\mu_{s,p}$ and $J_p$ are defined in \eqref{mu} and \eqref{jp} respectively.
\end{Definition}
\begin{Remark}\label{weldef}
In light of Lemma \ref{Happ}, we note that Definitions \ref{wksolequ} and \ref{wksolsys} are well-defined.
\end{Remark}
\subsection{Main results}
Our main results of this article read as follows:
\begin{Theorem}\label{t1.1}
		Let $0<s<1<p<\infty$, $f:\mb R\to\mb R$ is a continuous function and satisfies the hypothesis $(f_1)$ and $F(t) = \I{0}^t f(s) ds$. Suppose that $(\alpha,\beta)\in S$ and $u \in X_{\alpha,\beta}$ is a weak solution of \eqref{meqn}. Then $u$ satisfies the following Pohozaev type identity:
		\begin{align*}
			\frac{\alpha(n-p)}{p} \|H(\na u)\|_{L^p(\mb R^n)}^p+ \frac{\beta(n-sp)}{p} [u]_{s,p}^p = n\I{\mb R^n} F(u)\,dx.
		\end{align*}
	\end{Theorem}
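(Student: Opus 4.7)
My plan is to adapt the difference-quotient / cutoff strategy of \cite{LLjmaa, Ambrosio} to the mixed operator $\mathcal{M}_{\alpha,\beta}$. The formal Pohozaev multiplier is $\phi(x) = x\cdot\nabla u(x)$, but in the $W^{1,p}(\mathbb{R}^n)$ setting this is neither compactly supported nor necessarily admissible in the weak formulation. I would therefore test \eqref{wksoldef} against a regularized, truncated version of it,
\begin{equation*}
\phi_{h,R}(x) \;=\; \eta_R(x)\sum_{i=1}^{n} x_i\,\frac{u(x+h e_i)-u(x-h e_i)}{2h},
\end{equation*}
where $\eta_R\in C_c^\infty(\mathbb{R}^n)$ satisfies $\eta_R\equiv 1$ on $B_R$, $\supp\eta_R\subset B_{2R}$, and $|\nabla\eta_R|\le C/R$. (In the range $(\alpha,\beta)=(0,1)$ with only $W^{s,p}$ regularity and $C^{1,1}$ local regularity, one would use instead a dilation-type quotient $\eta_R(x)\,\tau^{-1}\{u((1+\tau)x)-u(x)\}$ with $\tau\to 0^+$, which is the formulation actually employed in \cite{Ambrosio} and is compatible with the solution space \eqref{solsp}.) Plugging $\phi_{h,R}$ into the weak formulation yields three terms: a local quasilinear term, a nonlocal term, and a source term, each of which I would analyze separately and then pass to the limits $h\to 0^+$ and $R\to\infty$.

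For the anisotropic piece, after computing $\nabla\phi_{h,R}$ and splitting off the factor involving $\nabla\eta_R$ (which vanishes in the limit $R\to\infty$ thanks to $H(\nabla u)\in L^p$ and $|\nabla\eta_R|\le C/R$), one is left with integrals of the form
\begin{equation*}
\int_{\mathbb{R}^n} H(\nabla u)^{p-1}\nabla_\xi H(\nabla u)\cdot \nabla\!\Big(x_i\,D_i^h u\Big)\,dx,
\end{equation*}
where $D_i^h u$ denotes the symmetric difference quotient in direction $e_i$. A discrete integration-by-parts transfers $D_i^h$ onto the gradient term, and using the standard identity $\int D^h F\cdot G = -\int F\cdot D^{-h} G$ together with Lebesgue differentiation reduces matters, in the limit $h\to 0^+$, to
\begin{equation*}
-\sum_{i=1}^n\int_{\mathbb{R}^n} x_i\,\partial_i\!\Big(\tfrac{1}{p}H(\nabla u)^p\Big)\,dx \;-\; \int_{\mathbb{R}^n} H(\nabla u)^p\,dx,
\end{equation*}
where I have used the Euler identity $H(\xi)^{p-1}\nabla_\xi H(\xi)\cdot\xi = H(\xi)^p$ that follows from Lemma \ref{Happ}(A) combined with the $1$-homogeneity of $H$. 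A further integration by parts against the cutoff $\eta_R$ produces the coefficient $-\frac{n-p}{p}\|H(\nabla u)\|_{L^p(\mathbb{R}^n)}^p$ after sending $R\to\infty$; the $C^1$-regularity of $H$ away from the origin and the uniform bound in Lemma \ref{Happ}(C) guarantee the requisite dominated convergence.

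For the nonlocal piece, the natural move is to symmetrize
\begin{equation*}
\int\!\!\!\int J_p(u(x)-u(y))\bigl(\phi_{h,R}(x)-\phi_{h,R}(y)\bigr)\,d\mu_{s,p}
\end{equation*}
and, in the limit, interpret it as
\begin{equation*}
\frac{1}{2}\int\!\!\!\int \frac{|u(x)-u(y)|^{p-2}(u(x)-u(y))\bigl((x\cdot\nabla u(x))-(y\cdot\nabla u(y))\bigr)}{|x-y|^{n+ps}}\,dxdy.
\end{equation*}
Writing $x\cdot\nabla u(x)-y\cdot\nabla u(y) = \frac{x+y}{2}\cdot\nabla(u(x)-u(y)) + \frac{x-y}{2}\cdot\nabla(u(x)+u(y))$ and using the change of variables $z=x-y$, $w=(x+y)/2$ together with the identity $\tfrac{d}{dp}|t|^p = p|t|^{p-2}t$ applied to the kernel $|z|^{-(n+ps)}$, the integral reduces, after another integration by parts in $w$, to $-\frac{n-sp}{p}[u]_{s,p}^p$. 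Equivalently, the dilation-quotient route of \cite{Ambrosio} yields the same identity by a direct change of variables $\xi=(1+\tau)x$, $\eta=(1+\tau)y$, which produces the Jacobian factor $(1+\tau)^{-2n}$ and the kernel rescaling $(1+\tau)^{-(n+ps)}$, whose Taylor expansion at $\tau=0$ gives precisely the coefficient $(2n - (n+ps))/p = (n-sp)/p$.

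The right-hand side is the easiest: as $h\to 0^+$ (or $\tau\to 0^+$) and $R\to\infty$,
\begin{equation*}
\int_{\mathbb{R}^n} f(u)\,\phi_{h,R}\,dx \;\longrightarrow\; \int_{\mathbb{R}^n} f(u)\,(x\cdot\nabla u)\,dx = \int_{\mathbb{R}^n} x\cdot\nabla F(u)\,dx = -n\int_{\mathbb{R}^n} F(u)\,dx,
\end{equation*}
where $(f_1)$ combined with $u\in L^p(\mathbb{R}^n)$ justifies both the passage to the limit and the final integration by parts (integrating $\nabla F(u)$ against $x\eta_R$ and letting $R\to\infty$). Collecting $-\alpha\,(n-p)/p\cdot\|H(\nabla u)\|_{L^p}^p - \beta\,(n-sp)/p\cdot[u]_{s,p}^p = -n\int F(u)$ and multiplying by $-1$ gives the claim. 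The main obstacle I anticipate is the justification of the passage to the limit in the nonlinear quantity $H(\nabla u)^{p-1}\nabla_\xi H(\nabla u)$ tested against difference quotients when only $\nabla u \in L^p$ is known: this forces one to combine a Fréchet-type expansion of the Euler identity with a careful use of the $L^p$-continuity of translations and the bound in Lemma \ref{Happ}(C), exactly in the spirit of \cite{LLjmaa}. Controlling the cutoff error in the fractional term as $R\to\infty$ (since the nonlocal correction is supported on the whole space) is the secondary delicate point, handled by the $[u]_{s,p}<\infty$ integrability and a fractional Hardy-type tail estimate.
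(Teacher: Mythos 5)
Your overall plan---test with a cutoff times $x\cdot D^{h}u$, split the anisotropic and nonlocal terms into a ``chain-rule'' part plus an error, integrate by parts, then send $h\to 0$ and the cutoff parameter to its limit---is the same strategy the paper follows. However, two things need attention, one small and one essential.

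The small one: the factor $\tfrac{1}{2}$ you insert when ``symmetrising'' the nonlocal term is spurious under the paper's convention. The weak formulation \eqref{wksoldef} is written directly as the double integral $\beta\int\!\int J_p(u(x)-u(y))(\phi(x)-\phi(y))\,d\mu_{s,p}$, with no $\tfrac12$; with your $\tfrac12$ in place the $(w,z)$ change of variables and the two integrations by parts deliver $\frac{\beta(sp-n)}{2p}[u]_{s,p}^p$, half the required coefficient. (Your $(w,z)$ computation is actually a clean alternative to the paper's divergence-theorem-in-$\mathbb{R}^n\setminus\overline{B_\mu(y)}$ argument for $J_1$; either way the boundary term near $x=y$ is killed by the $C^{0,l}$, $l>s$, regularity encoded in $X_{\alpha,\beta}$, not by a Hardy estimate---you need $|\delta_zu|^p|z|^{1-n-ps}\cdot|z|^{n-1}\lesssim|z|^{(l-s)p}\to 0$.)

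The essential gap is what you yourself flag but leave open: justifying the step where $\sum_i\int H(\nabla u)^{p-1}\nabla_\xi H(\nabla u)\cdot x_i\,D_i^h\nabla u\,dx$ is replaced by $\tfrac1p\sum_i\int x_i\,D_i^h(H(\nabla u)^p)\,dx$ (and the analogous step for the nonlocal kernel). A ``Fr\'echet-type expansion'' together with continuity of translations will not close this on its own under the hypotheses, because only $\nabla u\in L^p$ is known and the discrepancy involves second-order information on the integrand. The paper's device is sharper and worth isolating: write the discrepancy
\begin{equation*}
R_j \;=\; D_j\!\bigl(H(\nabla u)^p\bigr)\;-\;p\,H(\nabla u)^{p-1}\nabla_\xi H(\nabla u)\cdot D_j(\nabla u),
\end{equation*}
together with its nonlocal counterpart, and note that both are pointwise nonnegative by convexity of $t\mapsto |t|^p$. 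Consequently the error term is a single-signed quantity, and since $\int_{\mathbb{R}^n}D_jK\,dx=0$ for $K\in L^1$, the contributions of $D_j(H(\nabla u)^p)$ and $D_j(|u(x)-u(y)|^p)$ integrate to zero. Testing the original weak equation with $\phi=\sum_j D_j u$ then converts $\sum_j\int\!\bigl(p\,H^{p-1}\nabla_\xi H\cdot D_j\nabla u + p\,(\text{nonlocal})\bigr)$ into $p\sum_j\int f(u)\,D_ju\,dx$, so the total error is bounded by
\begin{equation*}
C\sum_j\Bigl|\int_{\mathbb{R}^n}\bigl(D_jF(u)-f(u)D_ju\bigr)\,dx\Bigr|
= C\sum_j\Bigl|\int_{\mathbb{R}^n}\bigl(f(\tilde u)-f(u)\bigr)D_ju\,dx\Bigr|
\leq C\sum_j\|f(\tilde u)-f(u)\|_{L^{p'}}\|\nabla u\|_{L^p},
\end{equation*}
which tends to $0$ using $(f_1)$ and $L^{p'}$-continuity of translations. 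It is this combination of sign (convexity), cancellation ($\int D_j=0$), and re-use of the equation itself that makes the passage to the limit rigorous; without it your ``reduces matters, in the limit $h\to 0^+$, to'' is an unjustified assertion and the proof does not close.
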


Next, we have the following result for the system of equations:
\begin{Theorem}\label{t1.2}
Let $0<s<1<p<\infty$. Suppose $g:\mb R^2\to\mb R$ is a continuous function and satisfies the hypothesis $(g_1)$. Assume that $(\alpha,\beta)\in S$ and $(u,v)\in X_{\alpha,\beta}\times X_{\alpha,\beta}$ be a weak solution of \eqref{msys} such that $g(u,v)\in L^1(\mb R^n)$. Then $(u,v)$ satisfies the following Pohozaev-type identity:
		\begin{align*}
			\frac{\alpha(n-p)}{p} \Big(\|H(\na u)\|_{L^p(\mb R^n)}^p+\|H(\na v)\|_{L^p(\mb R^n)}^p\Big)
 + \frac{\beta(n-sp)}{p} \Big([u]_{s,p}^p+[v]_{s,p}^p\big)=n\int_{\mathbb{R}^n} g(u,v)\,dx.
		\end{align*}
\end{Theorem}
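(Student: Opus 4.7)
The plan is to reduce Theorem \ref{t1.2} to a two-fold application of the argument behind Theorem \ref{t1.1}. I would test the first weak formulation \eqref{wksysdef1} against the same cut-off difference-quotient approximation $\phi$ of $x\cdot\na u$ that is used to prove Theorem \ref{t1.1}, and the second weak formulation \eqref{wksysdef2} against the analogous test function $\psi$ built from $v$. On the left-hand sides, the anisotropic and fractional pieces admit the same convergence analysis as in the single-equation case, so in the limit (difference-quotient parameter to zero and cut-off radius to infinity) they produce
\begin{align*}
-\frac{\alpha(n-p)}{p}\|H(\na u)\|_{L^p(\mb R^n)}^p-\frac{\beta(n-sp)}{p}[u]_{s,p}^p
\end{align*}
and the corresponding expression with $v$ in place of $u$; here Lemma \ref{Happ}, in particular the Euler identity $x\cdot\na H(x)=H(x)$, is the key ingredient for the anisotropic piece, exactly as in Theorem \ref{t1.1}.

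Summing the two identities and turning to the right-hand sides, the essential ingredient is the pointwise chain-rule identity
\begin{align*}
g_u(u,v)(x\cdot\na u)+g_v(u,v)(x\cdot\na v)=x\cdot\na_x\bigl[g(u(x),v(x))\bigr],
\end{align*}
which converts the sum of the two forcing contributions into a single divergence-type expression. Since $g(u,v)\in L^1(\mb R^n)$ by assumption, a standard cut-off integration by parts yields
\begin{align*}
\I{\mb R^n}x\cdot\na\bigl[g(u,v)\bigr]\,dx=-n\I{\mb R^n}g(u,v)\,dx,
\end{align*}
and equating the two sides (and then multiplying through by $-1$) produces the Pohozaev identity claimed in Theorem \ref{t1.2}.

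The main obstacle is implementing this chain-rule step at the regularized level: since $\phi$ and $\psi$ are only approximations of $x\cdot\na u$ and $x\cdot\na v$ (the solutions need not be $C^1$), the combination $g_u(u,v)\phi+g_v(u,v)\psi$ has to be shown to converge to $x\cdot\na_x[g(u,v)]$ in a sense strong enough to justify the integration by parts. The growth hypothesis $(g_1)$, together with the Sobolev regularity encoded in the space $X_{\alpha,\beta}$, supplies the dominating functions needed for this passage to the limit by dominated convergence; and in the mixed case $(\alpha,\beta)=(1,\gamma)$ the simultaneous control of local difference quotients and nonlocal pairings is handled exactly as in Theorem \ref{t1.1}, applied coordinate-wise to $u$ and to $v$. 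All remaining steps (integrability of each term, vanishing of the boundary contribution produced by the cut-off, and the limit of the fractional bilinear form under the chosen test functions) are term-by-term repetitions of the single-equation case and do not require a new idea beyond those already present in the proof of Theorem \ref{t1.1}.
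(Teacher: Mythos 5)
Your plan matches the paper's proof: test the two weak formulations with the cut-off difference-quotient test functions $\psi_\la\sum_j x_j D_j u$ and $\psi_\la\sum_j x_j D_j v$, sum the resulting identities, and reduce the right-hand side via the chain-rule identity $(\Psi_\la\cdot\na u)\,g_u(u,v)+(\Psi_\la\cdot\na v)\,g_v(u,v)=\Psi_\la\cdot\na\bigl[g(u,v)\bigr]$ followed by integration by parts. The one point to sharpen is that this coupling is equally indispensable at the approximation-error stage — the convexity defects $I_{3,2}+J_{2,2}$ and their primed analogues must be estimated \emph{jointly}, by testing \eqref{wksysdef1} with $\sum_j D_j u$ and \eqref{wksysdef2} with $\sum_j D_j v$ and adding, so that $g_u(u,v)D_j u+g_v(u,v)D_j v$ reconstitutes $D_j g(u,v)$ (whose integral vanishes by translation invariance); the error coming from the $u$-equation alone is not controllable, so ``term-by-term repetition applied coordinate-wise to $u$ and $v$'' does not quite capture this step.
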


  \begin{Corollary}\label{cor1}
    Suppose $f$ satisfies the general hypothesis
    $$
(f_2)~~~~~~~~|f(t)| \leq C(|t|^{p-1}+|t|^{q-1}),~\text{for}~t \in \mb R,
$$
and $g$ satisfies the general hypothesis
$$
(g_2)~~~~~~~~|g_t(t,s)| + |g_s(t,s)| \leq C (|t|^{p-1}+|s|^{p-1}),~(t,s) \in \mb R^2,
$$
for some constant $C>0$, where $p<q\leq p^*=\frac{np}{n-p}$ with $1<p<n$. Then $u\in L^\infty(\mb R^n)$ gives the implication $(f_2)$ to $(f_1)$ and $(g_2)$ to $(g_1)$. Therefore, Theorem \ref{t1.1} is valid for any $u\in X_{\alpha,\beta}\cap L^\infty(\mb R^n)$ when $f$ satisfies $(f_2)$ and Theorem \ref{t1.2} is valid for any $u\in X_{\alpha,\beta}\cap L^\infty(\mb R^n)$ when $g$ satisfies $(g_2)$ respectively. In particular, Theorem \ref{t1.1} extends \cite[Proposition 2.1]{LLjmaa} to the anisotropic case for bounded weak solutions of \eqref{meqn}.
    \end{Corollary}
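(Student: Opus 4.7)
The plan is to reduce the general growth hypotheses $(f_2)$ and $(g_2)$ to the more restrictive $(f_1)$ and $(g_1)$ by exploiting the $L^\infty$ bound on the solution, and then to invoke Theorems \ref{t1.1} and \ref{t1.2} directly. The key observation is elementary: since $q>p$, for every $t$ with $|t|\le M:=\|u\|_{L^\infty(\mb R^n)}$ one has $|t|^{q-1}=|t|^{q-p}|t|^{p-1}\le M^{q-p}|t|^{p-1}$, so hypothesis $(f_2)$ yields
\begin{equation*}
|f(t)|\le C(1+M^{q-p})|t|^{p-1},\qquad |t|\le M.
\end{equation*}
Since $u(x)\in[-M,M]$ almost everywhere, only the restriction of $f$ to this compact interval enters the weak formulation \eqref{wksoldef} and the right-hand side $\int_{\mb R^n}F(u)\,dx$ of the Pohozaev identity.

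To apply Theorem \ref{t1.1} cleanly, I would replace $f$ outside $[-M,M]$ by a continuous extension $\tilde f$ that coincides with $f$ on $[-M,M]$ and satisfies $|\tilde f(t)|\le\tilde C|t|^{p-1}$ globally. A simple choice is $\tilde f(t):=f\bigl(M\,\mathrm{sgn}(t)\bigr)$ for $|t|>M$, which is continuous at $\pm M$ and satisfies $|\tilde f(t)|\le K(|t|/M)^{p-1}$ for $|t|>M$, where $K:=\sup_{|s|\le M}|f(s)|$. Then $\tilde f$ verifies $(f_1)$ with constant $\tilde C:=\max\{C(1+M^{q-p}),\,K/M^{p-1}\}$; moreover $u$ remains a weak solution of $\mathcal{M}_{\alpha,\beta}u=\tilde f(u)$, and the antiderivative $\tilde F$ agrees with $F$ on the essential range of $u$, so $\int_{\mb R^n}F(u)\,dx=\int_{\mb R^n}\tilde F(u)\,dx$. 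Theorem \ref{t1.1} applied to $\tilde f$ then delivers the desired identity for $f$.

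The system case proceeds identically: set $M:=\max\{\|u\|_{L^\infty},\|v\|_{L^\infty}\}$, observe that the higher-order terms in $(g_2)$ are dominated by $(p{-}1)$-growth on $[-M,M]^2$, extend $g$ continuously beyond this square to enforce $(g_1)$-type growth globally, and invoke Theorem \ref{t1.2}; the assumption $g(u,v)\in L^1(\mb R^n)$ is preserved since the modification does not alter $g$ on the range of $(u,v)$. The concluding remark that this extends \cite[Proposition 2.1]{LLjmaa} follows by specializing to $H(\xi)=|\xi|$, in which case $H_p$ reduces to the classical $p$-Laplacian and our framework recovers the setting of \cite{LLjmaa}. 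No substantive obstacle arises: the corollary is a direct book-keeping consequence of the fact that $f$ and $g$ enter the identities only through their values on the range of the bounded solution.
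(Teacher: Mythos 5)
Your proposal is correct and matches the paper's (unstated but implied) argument: since $u$ is bounded, on the essential range $[-M,M]$ of $u$ the factor $|t|^{q-1}$ is dominated by $M^{q-p}|t|^{p-1}$, so $(f_2)$ collapses to $(f_1)$ and Theorem \ref{t1.1} applies; the paper gives no more detail than this. Your extension step (replacing $f$ by $\tilde f$ agreeing with $f$ on $[-M,M]$ and truncated outside) is a welcome formalization, since the theorem's hypothesis $(f_1)$ is a global growth condition and the literal implication ``$(f_2)\Rightarrow(f_1)$'' is false; modifying $f$ off the range of $u$ does not change $f(u)$, $F(u)$, or the weak formulation, so the identity transfers.
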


\subsection{Notations}
The following notations will be used consistently throughout the paper unless stated otherwise.
 \begin{itemize}
 
\item For $0<s<1<p<\infty$, we denote by 
 \begin{equation}\label{mu}
 d\mu_{s,p}=\frac{dx dy}{|x-y|^{n+sp}},
 \end{equation}
 and
 \begin{equation}\label{jp}
 J_p(r)=|r|^{p-2}r\text{ for } r\in\mb R.
 \end{equation}
 \item For $t\in\mb R$, we denote by $t_\pm=\max\{\pm t,0\}$.
 
 \item If $v:\mb R^n\to \mb R$, we define the difference quotient of $v$ by $D_i v(x) = \frac{v(x+he_i)-v(x_i)}{h}$, where $e_i=(0,\ldots,1,\ldots,0)$ is the $i^\text{th}$ unit vector along the coordinate $x_i$, $i\in\{1,2,\ldots,n\}$. Further, if $v:\mb R^n\to\mb R^n$ is given by $v(x)=(v_1(x),\ldots,v_n(x))$, then we define $D_i v(x)=(D_i v_1(x),\ldots, D_i v_n(x))$, where $i\in\{1,2,\ldots,n\}$.
 
 \item The symbol $C$ will represent a positive constant. When $C$ depends on parameters $r_1,r_2,\ldots,r_k$, we denote this by $C=C(r_1,r_2,\ldots,r_k)$.

 \end{itemize}

\section{Proof of the main results}
\textbf{Proof of Theorem \ref{t1.1}:}
Let $\psi \in C_c^1(\mb R^n)$ be such that $0 \leq \psi \leq 1$ in $\mb R^n$, $\psi\equiv 1$ for $|x| \leq 1$ and $\psi(x)\equiv 0$ for $|x| \geq 2$. We define $ \psi_\la(x) =\psi(\la x)$ for all $x \in \mb R^n$ and $\la >0$. Note that for all $x \in \mb R^n$ and $\la >0$, we get
		\begin{align}\label{e2.2}
			0 \leq \psi_\la (x) \leq 1~\text{and}~|x||\na \psi_\la(x)| \leq C,
		\end{align}
		where the constant $C>0$ is independent of $\la$. Also, we define the $C^1$ vector field as 
        \begin{equation}\label{psil}
        \Psi_\la(x)=\psi_\la(x)x.
        \end{equation}
        Now using \eqref{e2.2}, we conclude that
			\begin{align*}
				|\Psi_\la(x)-\Psi_\la(y)| \leq C|x-y|,
			\end{align*}
			for some constant $C>0$ independent of $\la$ and so $\Psi_\la \in C_c^{0,1}(\mb R^n, \mb R^n)$. Let $(\alpha,\beta)\in S$. Then choosing the test function $\varphi=\psi_\la\sum_{j=1}^{n}x_j D_j u$ in \eqref{wksoldef}, we obtain
\begin{equation}\label{e4.8}
\begin{split}
   &\alpha\I{\mb R^n} H(\nabla u)^{p-1})\nabla_\xi H(\nabla u) \cdot \nabla \varphi dx+ \beta  \I{\mb R^n}\I{\mb R^n} {|u(x)-u(y)|^{p-2}(u(x)-u(y)) \varphi(x)-\varphi(y))}\,d\mu_{s,p}\\
   &\quad
   = \I{\mb R^n} f(u) \varphi dx.
   \end{split}
\end{equation}
We observe that 
            \begin{equation}\label{MeI}
            \begin{split}
         \alpha\I{\mb R^n}H(\nabla u)^{p-1}\nabla_\xi H(\nabla u) \cdot \nabla \varphi \,dx&= \alpha\I{\mb R^n}  H(\nabla u)^{p-1}\nabla_\xi H(\nabla u) \cdot \nabla \left( \sum\limits_{j=1}^n \psi_\la x_j D_ju\right)\,dx \\
        &:= I_1 + I_2 + I_3,
        \end{split}
            \end{equation}
            where
            $$
            I_1=\alpha\sum\limits_{i,j=1}^n \I{\mb R^n} H(\na u)^{p-1} \frac{\partial H(\nabla u)}{\partial \xi_i} \frac{\partial \psi_\la }{\partial x_i}x_j D_j u\,dx,
            $$
            $$
            I_2=\alpha\sum\limits_{i,j=1}^n\I{\mb R^n} H(\nabla u)^{p-1}\frac{\partial H(\nabla u)}{\partial \xi_i} \psi_\la D_j(u) \delta_{ij} \,dx,
            $$
            and
            $$
            I_3=\alpha\sum\limits_{i,j=1}^n \I{\mb R^n} H(\na u)^{p-1} \frac{\partial H(\nabla u)}{\partial \xi_i}  \frac{\partial (D_j u)}{\partial x_i} {\psi_\la} x_j\,dx:=I_{3,1}-I_{3,2},
            $$
            where
            $$
    I_{3,1}=\frac{\alpha}{p} \sum\limits_{j=1}^n \I{\mb R^n} D_j(H(\nabla u)^p) \psi_\lambda x_j \,dx,
    $$
    and
    $$
    I_{3,2}=\frac{\alpha}{p}\sum\limits_{j =1}^n \I{\mb R^n}(D_j(H(\nabla u)^p) -p H(\nabla u)^{p-1}\nabla_\xi H(\nabla u)\cdot D_j(\nabla u))\psi_\lambda x_j \,dx.
    $$
    Here $\delta_{ij}$ being the Kronecker symbol, that is, $\delta_{ij}=1$ when $i=j$ and $0$ otherwise. We observe that
\begin{equation}\label{nonl-II}
\begin{split}
&\beta\I{\mb R^n}\I{\mb R^n}{|u(x)-u(y)|^{p-2}(u(x)-u(y))(\varphi(x)-\varphi(y))}\,d\mu_{s,p}\\
=&\quad\beta\sum\limits_{j=1}^n \I{\mb R^n}\I{\mb R^n}{|u(x)-u(y)|^{p-2}(u(x)-u(y))(\psi_\lambda(x) x_j D_j u(x)-\psi_\lambda(y) y_jD_j u(y))}\,d\mu_{s,p}\\
:=&\quad J_{1} + J_{2}, 
\end{split}
\end{equation}
where
\begin{align*}
    J_{1} =\beta\sum\limits_{j=1}^n \I{\mb R^n}\I{\mb R^n}{|u(x)-u(y)|^{p-2}(u(x)-u(y))(\psi_\lambda(x) x_j -\psi_\lambda(y) y_j)D_j u(x)}\,d\mu_{s,p},
\end{align*}
and 
\begin{align*}
    J_{2} =&\beta\sum\limits_{j=1}^n \I{\mb R^n}\I{\mb R^n} {|u(x)-u(y)|^{p-2}(u(x)-u(y))(D_j u(x) - D_j u(y))\psi_\lambda(y)y_j}\,d\mu_{s,p}\\
    =& \beta\sum\limits_{j=1}^n \I{\mb R^n}\I{\mb R^n}{|u(x)-u(y)|^{p-2}(u(x)-u(y))(D_j u(x) - D_j u(y))\psi_\lambda(x)x_j}\,d\mu_{s,p}\\
    =& J_{2,1} - J_{2,2},
    \end{align*}
    where
    \begin{align*}
      J_{2,1} =\frac{\beta}{p} \sum\limits_{j=1}^n \I{\mb R^n}\I{\mb R^n}{D_j(|u(x)-u(y)|^p)\psi_\lambda(x) x_j}\,d\mu_{s,p},
    \end{align*}
    and
\begin{align*}
    J_{2,2}
    =\frac{\beta}{p}\sum\limits_{j=1}^n\I{\mb R^n}\I{\mb R^n}\Big(D_j(|u(x)-u(y)|^p)-p|u(x)-u(y)|^{p-2}(u(x)-u(y))(D_j u(x) - D_j u(y))\Big)\psi_\lambda(x)x_j\,d\mu_{s,p}.
\end{align*}
Therefore, taking into account \eqref{MeI} and \eqref{nonl-II}, equation \eqref{e4.8} reduces into
\begin{equation}\label{lnest-II}
I_1+I_2+I_{3,1}-I_{3,2}+(J_{1}+J_{2,1}-J_{2,2})=L,
\end{equation}
where
\begin{equation*}
L=\I{\mb R^n} f(u) \Big(\sum\limits_{j=1}^n \psi_\lambda x_j D_j u\Big)\, dx.
\end{equation*}
 \noi\textbf{Estimates of $I_1$ and $I_2$:} {We observe that
    \begin{equation}\label{estI1-I}
    \begin{split}
       \lim_{h\to 0} I_1&= {\alpha}\sum\limits_{i,j=1}^n \I{\mb R^n}H(\nabla u)^{p-1} \frac{\partial H(\nabla u)}{\partial \xi_i} x_j \frac{\partial u}{\partial x_j} \frac{\partial \psi_\la}{\partial x_i}\,dx ={\alpha} \I{\mb R^n} H(\nabla u)^{p-1} (\nabla_\xi H(\nabla u) \cdot \nabla \psi_\lambda)( x \cdot \nabla u)\,dx.
    \end{split}
    \end{equation}
    Further, using Lemma \ref{Happ}, we have
   \begin{equation}\label{estI2-I}
    \begin{split}
       \lim_{h\to 0} I_2 = {\alpha}\sum\limits_{i,j=1}^n \I{\mb R^n} H(\nabla u)^{p-1} \frac{\partial H(\nabla u)}{\partial \xi_i} {\psi_\la} \frac{\partial u}{\partial x_j} \delta_{ij}\,dx&= {\alpha}\I{\mb R^n} \psi_\lambda\,H(\nabla u)^{p-1} \nabla_\xi H(\nabla u) \cdot \nabla u\,dx\\
        &={\alpha} \I{\mb R^n} H(\nabla u)^p\,\psi_\lambda \, dx.
        \end{split}
    \end{equation}
\noi \textbf{Estimate of $I_{3,1}$:} We observe that for any {$K\in L^1(\mathbb{R}^n)$}, we have
    \begin{align}\label{e4.4}
       \I{\mb R^n} D_j K \,dx = \frac1h\I{\mb R^n} (K(x+he_j) -K(x))\,dx
       =\frac1h \left( \I{\mb R^n} K(x) \,dx - \I{\mb R^n} K(x) \,dx \right)
       =0.
    \end{align}
         Using \eqref{e4.4} and the property
         \begin{equation}\label{up}
         D_j(fg)=g\,D_j f+\delta_j f\,D_j g
         \end{equation}
         for every $j=1,2,\ldots,n$, where $\delta_j f(x)=f(x+he_j)$, , we have
    \begin{equation}\label{I31est-I}
    \begin{split}
       \lim_{h\to 0} I_{3,1}=& \lim_{h\to 0}\left\{\frac{{\alpha}}{p} \sum\limits_{j=1}^n \I{\mb R^n}D_j(H(\nabla u)^p \psi_\lambda x_j)-\frac{{\alpha}}{p}\sum\limits_{j=1}^n\I{\mb R^n}H(\nabla u(x+he_j))^pD_j(\psi_\lambda x_j) \,dx\right\}\\
        =&-\frac{{\alpha}}{p}\lim_{h\to 0}\sum\limits_{j=1}^n\I{\mb R^n}H(\nabla u(x+he_j))^pD_j(\psi_\lambda x_j) \,dx\\
        =&-\frac{{\alpha}}{p}\lim_{h\to 0}\sum\limits_{j=1}^n\I{\mb R^n}H(\nabla u(x+he_j))^pD_j(\psi_\lambda x_j) \,dx\\
        = & -\frac{{\alpha}}{p} \sum\limits_{j=1}^n \I{\mb R^n} H(\nabla u)^p \frac{\partial (\psi_\lambda x_j)}{\partial x_j}\,dx\\
        =&-\frac{n{\alpha}}{p} \I{\mb R^n} H(\nabla u)^p \psi_\lambda \,dx -\frac{{\alpha}}{p} \I{\mb R^n} H(\nabla u)^p \nabla \psi_\lambda \cdot x \,dx.
    \end{split}
    \end{equation}
\textbf{Estimate of $I_{3,2}+J_{2,2}$:} Using the convexity of the map $t \mapsto |t|^p$, $p>1$, we deduce that
    \begin{align}\label{e4.3}
    D_j(H(\nabla u)^p)& -p H(\nabla u)^{p-1}\nabla_\xi H(\nabla u)\cdot D_j(\nabla u)
   \geq 0,
    \end{align}
    and 
\begin{align}\label{e4.9}
    D_j(|u(x)-u(y)|)^p-p|u(x)-u(y)|^{p-2}(u(x)-u(y))(D_j u(x)-D_j u(y) \geq 0,
\end{align}
for every $x,y\in\mb R^n$.
Using \eqref{e4.4}, we get
    \begin{align}\label{e4.5}
        \I{\mb R^n} D_j( H(\nabla u)^p) \,dx = \I{\mb R^n} D_j(f(u)u) \,dx = \I{\mb R^n} D_j F(u) \,dx =0.    \end{align}
Further, using similar arguments as in \eqref{e4.4} for double integral, we have
\begin{align}\label{e4.10}
    \I{\mb R^n}\I{\mb R^n} {D_j(|u(x)-u(y)|^p)}\,d\mu_{s,p} =0.
\end{align}
Choosing $\phi=\sum_{j=1}^{n}D_j u$ as a test function in \eqref{wksoldef} and combining the estimates \eqref{e4.3}, \eqref{e4.9}, \eqref{e4.5} and \eqref{e4.10}, we have
\begin{align*}
    \Big|&I_{3,2} + J_{2,2}\Big|\\
    \leq& C \sum\limits_{j=1}^n\left( {\alpha}\I{\mb R^n}{\Big(}D_j(H(\nabla u)^p -p H(\nabla u)^{p-1} \nabla_\xi H(\nabla u) \cdot \nabla( D_ju ){\Big)} dx \right.\\
    &+ \left. \beta\I{\mb R^n}\I{\mb R^n}{\Big(}{D_j(|u(x)-u(y)|^p)-p|u(x)-u(y)|^{p-2}(u(x)-u(y))(D_j u(x) - D_j u(y))}{\Big)}\,d\mu_{s,p}\right)\\
    =&Cp \sum\limits_{j=1}^n \I{\mb R^n} (D_j F(u) -f(u) D_j u)dx\\
 =& Cp  \sum\limits_{j=1}^n\I{\mb R^n}(f(\tilde{u})-f(u))D_j u \,dx,
    \end{align*}
    where $C = C(\lambda)>0$ is a constant}, $\tilde{u}$ is between $u$ and $u(\cdot + he_j)$. Now using the H\"{o}lder's inequality in the above estimate, we obtain
    \begin{equation}\label{I32--I}
        \Big|I_{3,2}+J_{2,2}\Big| \leq C \sum\limits_{j =1}^n \| f(\tilde{u})-f(u)\|_{L^{p'}(\mathbb{R}^n)}\|D_j u\|_{L^p(\mathbb{R}^n)}\leq  C \sum\limits_{j =1}^n \| f(\tilde{u})-f(u)\|_{L^{p'}(\mathbb{R}^n)} \| \nabla u\|_{L^p(\mathbb{R}^n)},
    \end{equation}
    where $1/p + 1/p'=1$. Since $u( \cdot + he_j) \rightarrow u$ in $L^p(\mb R^n)$, we obtain $\tilde{u} \rightarrow u$ in $L^p(\mb R^n)$. Finally using $(f_1)$, we have $|f(\tilde{u})| \leq |\tilde{u}|^{p-1}$ and since $|\tilde{u}|^{p-1} \ra |u|^{p-1}$ in $L^{p'}(\mathbb{R}^n)$, by the generalized Lebesgue's dominated convergence theorem, we have
    \begin{equation}\label{fu}
        \lim_{h\to 0}\|f(\tilde{u})-f(u)\|_{{L^{p'}(\mathbb{R}^n)}}=0.
    \end{equation}
Using \eqref{fu} and that $\|\nabla u\|_{L^p(\mathbb{R}^n)} < \infty$ in \eqref{I32--I}, we obtain 
\begin{equation}\label{I32J22-II}
\lim_{h\to 0}(I_{3,2}+J_{2,2})=0.
\end{equation}
\textbf{Estimate of $J_1$:} Applying the divergence theorem in the domain $\mb R^n \setminus \overline{B_\mu(y)}$, with $y \in \mb R^n$ and $\mu >0$ fixed, we obtain
\begin{equation}\label{estJ1-II}
\begin{split}
   \lim_{h\to 0} J_1 =& \beta\sum\limits_{j=1}^n\I{\mb R^n}\I{\mb R^n} {|u(x)-u(y)|^{p-2}(u(x)-u(y))(\psi_\lambda(x)x_j-\psi_\lambda(y) y_j)u_{x_j}(x)}\,d\mu_{s,p}\\
   =&\beta\sum\limits_{j=1}^n\lim\limits_{\mu \rightarrow 0^+}\I{\mb R^n}\I{\mb R^n\setminus \overline {B_\mu(y)}}{|u(x)-u(y)|^{p-2}(u(x)-u(y))(\psi_\lambda(x)x_j-\psi_\lambda(y) y_j)u_{x_j}(x)}\,d\mu_{s,p}\\
=&\beta\sum\limits_{j=1}^n\lim\limits_{\mu \rightarrow 0^+}\I{\mb R^n}\I{\mb R^n\setminus \overline {B_\mu(y)}} \frac{\frac{\partial }{\partial x_j} (\frac1p|u(x)-u(y)|^p) (\psi_\lambda(x)x_j-\psi_\lambda(y) y_j)}{|x-y|^{n+ps}}\,dxdy\\
=& -\frac{\beta}{p}\sum\limits_{j=1}^n\lim\limits_{\mu \rightarrow 0^+}\I{\mb R^n}\I{\mb R^n\setminus \overline {B_\mu(y)}} \frac{|u(x)-u(y)|^p \frac{\partial}{\partial x_j} \psi_\lambda (x)x_j}{|x-y|^{n+sp}}\,dxdy\\
&+\frac{(n+sp)\beta}{p}\sum\limits_{j=1}^n\lim\limits_{\mu \rightarrow 0^+}\I{\mb R^n}\I{\mb R^n\setminus \overline {B_\mu(y)}} |u(x)-u(y)|^p \frac{(x_j-y_j)(\psi_\lambda(x) x_j-\psi_\lambda(y)y_j)}{|x-y|^{n+sp+2}}\,dxdy\\
&+\frac{\beta}{p}\sum\limits_{j=1}^n
\lim\limits_{ \mu\rightarrow 0^+} \I{\mb R^n} \I{\partial B_\mu(y)} |u(x)-u(y)|^p \frac {(y_j-x_j) (\psi_\lambda(x) x_j-\psi_\lambda(y) y_j)}{|x-y|^{n+sp+1}}\,d\sigma(y) dx\\
=&-\frac{\beta}{p} \lim_{\mu \rightarrow 0} \I{\mb R^n} \I{\mb R^n \setminus \overline{B_\mu(y)}}\frac{|u(x)-u(y)|^p}{|x-y|^{n+ps}}\text{div}(\Psi_\lambda(x) )\,dxdy\\
&+\frac{(n+sp)\beta}{p}\lim\limits_{\mu \rightarrow 0^+}\I{\mb R^n}\I{\mb R^n\setminus \overline {B_\mu(y)}} |u(x)-u(y)|^p \frac{(x-y)\cdot (\Psi_\lambda(x) -\Psi_\lambda(y))}{|x-y|^{n+sp+2}}\,dxdy\\
&+ \frac{\beta}{p} \lim\limits_{ \mu\rightarrow 0^+} \mu^{-n-1-sp} \I{\mb R^n} \I{\partial B_\mu(y)} |u(x)-u(y)|^p (y-x) \cdot (\Psi_\lambda(x) -\Psi_\lambda(y) )\,d\sigma(y) dx\\
=&-\frac{\beta}{p}  \I{\mb R^n} \I{\mb R^n }\frac{|u(x)-u(y)|^p}{|x-y|^{n+ps}}\text{div}\Psi_\lambda(x)\,dxdy\\
&+\frac{(n+sp)\beta}{p}\I{\mb R^n}\I{\mb R^n} |u(x)-u(y)|^p \frac{(x-y)\cdot (\Psi_\lambda(x) -\Psi_\lambda(y))}{|x-y|^{n+sp+2}}\,dxdy+ I\\
=&-\frac{\beta}{p}  \I{\mb R^n} \I{\mb R^n }\frac{|u(x)-u(y)|^p}{|x-y|^{n+ps}}\text{div}\Psi_\lambda(x)\,dxdy\\
&+\frac{(n+sp)\beta}{p}\I{\mb R^n}\I{\mb R^n} |u(x)-u(y)|^p \frac{(x-y)\cdot (\Psi_\lambda(x) -\Psi_\lambda(y))}{|x-y|^{n+sp+2}}\,dxdy,
\end{split}
\end{equation}
where we used that $I=0$. Indeed, to estimate $I$, we observe that, since $\Psi_\la$ has compact support, there exists  $R>0$ such that $\Psi_\la(x)-\Psi_\la(y)=0$, whenever ${(x,y)}\in S_{\mu}^c$, $\mu \in (0,1)$, where
$$
S_{\mu}=\{(x,y)\in B_R(0)\times B_R(0):|x-y|=\mu\}.
$$
Taking this into account along with the fact that $u\in C^{0,l}(\mathbb{R}^n)$ for some $l>s$, $\Psi_\la\in C^{0,1}(\mathbb{R}^n)$ and following the arguments of the proof of the estimate (3.3) in \cite[Page 2010]{Ambrosio}, we have $I=0$.  \\
\textbf{Estimate of $J_{2,1}$:} Using the property \eqref{up}, we have
\begin{equation*}
\begin{split}
    J_{2,1}=&\frac{\beta}{p} \sum\limits_{j=1}^n \I{\mb R^n}\I{\mb R^n}{D_j(|u(x)-u(y)|^p)\psi_\lambda(x) x_j}\,d\mu_{s,p}\\
    =& \frac{\beta}{p} \sum\limits_{j=1}^n\I{\mb R^n}\I{\mb R^n} {D_j( |u(x)-u(y)|^p \psi_\la(x)x_j)}\,d\mu_{s,p}\\
    &- \frac{\beta}{p} \sum\limits_{j=1}^n \I{\mb R^n}\I{\mb R^n} {|u(x+he_j)-u(y+he_j)|^p D_j(\psi_\la(x)x_j)}\,d\mu_{s,p}\\
    =& -\frac{\beta}{p} \sum\limits_{j=1}^n\I{\mb R^n}\I{\mb R^n} {|u(x+he_j)-u(y+he_j)|^p D_j(\psi_\la(x)x_j)}\,d\mu_{s,p}.
    \end{split}
    \end{equation*}
    Therefore,
    \begin{equation}\label{estJ21-II}
\begin{split}
    \lim_{h\to 0}J_{2,1} &= -\frac{\beta}{p} \sum\limits_{j=1}^n \I{\mb R^n}\I{\mb R^n}{|u(x)-u(y)|^p \frac{\partial(\psi_\la(x)x_j)}{\partial x_j}}\,d\mu_{s,p} \\
    &=-\frac{n\beta}{p} \I{\mb R^n}\I{\mb R^n}{|u(x)-u(y)|^p\psi_\lambda(x)}\,d\mu_{s,p}-\frac{\beta}{p}\I{\mb R^n}\I{\mb R^n}{|u(x)-u(y)|^p \nabla \psi_\lambda(x) \cdot x}\,d\mu_{s,p}.
    \end{split}
    \end{equation}
    \textbf{Estimate of $L$:} Using integration by parts, we have
    \begin{equation}\label{RHS-I}
    \begin{split}
       \lim_{h\to 0}L&=\lim_{h\to 0}\I{\mb R^n} f(u) \Big(\sum\limits_{j=1}^n \psi_\lambda(x) x_j D_j u\Big)\, dx\\
        & =  \I{\mb R^n} f(u) \psi_\lambda(x) \nabla u\cdot x \,dx=-\left(n \I{\mb R^n} \psi_\la(x)F(u(x))\,dx +\la \I{\mb R^n}F(u(x))x\cdot \na \psi_\la(x)\,dx\right).
        \end{split}
    \end{equation}
Combining the estimates \eqref{estI1-I}, \eqref{estI2-I}, \eqref{I31est-I}, \eqref{I32J22-II}, \eqref{estJ1-II}, \eqref{estJ21-II} and \eqref{RHS-I} in \eqref{lnest-II}, we obtain
\begin{align}\nonumber \label{e4.11}
  -&\left(n \I{\mb R^n} \psi_\la(x)F(u(x))\,dx +\la \I{\mb R^n}F(u(x))x\cdot \na \psi_\la(x)\,dx\right)\\ \nonumber
  =  & {\alpha}\I{\mb R^n} H(\nabla u)^{p-1} (\nabla_\xi H(\nabla u) \cdot \nabla \psi_\lambda(x))( x \cdot \nabla u)\,dx+ \frac{(p-n){\alpha}}{p} \I{\mb R^n} H(\nabla u)^p \psi_\lambda(x) \,dx \nonumber
   -\frac{{\alpha}}{p} \I{\mb R^n} H(\nabla u)^p \nabla \psi_\lambda(x) \cdot x \,dx\\ \nonumber
   &-\frac{n\beta}{p} \I{\mb R^n}\I{\mb R^n}{|u(x)-u(y)|^p\psi_\lambda(x)}\,d\mu_{s,p}-\frac{\beta}{p}\I{\mb R^n}\I{\mb R^n}{|u(x)-u(y)|^p \nabla \psi_\lambda(x) \cdot x}\,d\mu_{s,p}\\ \nonumber
   &-\frac{\beta}{p}  \I{\mb R^n} \I{\mb R^n }{|u(x)-u(y)|^p}\text{div}\Psi_\lambda(x)\,d\mu_{s,p}\\
&+\frac{(n+sp)\beta}{p}\I{\mb R^n}\I{\mb R^n} |u(x)-u(y)|^p \frac{(x-y)\cdot (\Psi_\lambda(x) -\Psi_\lambda(y))}{|x-y|^{n+sp+2}}\,dxdy.
\end{align}
           Finally, we pass on to the limits $\lambda \rightarrow 0$ in \eqref{e4.11}. To this end, using \eqref{e2.2}, we notice that, for every $x,y \in \mb R^n$ with $x \ne y$ and $\la >0$,
	\begin{align}\label{e2.7}
		\left| \text{div}(\Psi_\la(x))+\text{div}(\Psi_\la(y))-(n+qs)\frac{(\Psi_\la(x)-\Psi_\la(y))\cdot(x-y)}{|x-y|^2}\right| \leq C,
	\end{align}
	for some constant $C >0$ independent of $\la$.
	Using \eqref{e2.2}, \eqref{e2.7} along with the pointwise convergences $\psi_\la(x) \ra 1$, $\na \psi_\la(x)\cdot x \ra 0$, $\Psi_\la (x) \ra x$ and $\text{div}\, \Psi_\la(x) \ra n$ for all $x \in \mb R^n$ as $\la \ra 0$ and the facts that $u \in X_{\alpha,\beta},\,F(u)\in L^1(\mb R^n)$, we apply the Lebesgue's dominated convergence theorem to obtain
	\begin{align*}
	    \frac{\alpha(n-p)}{p} \| H(\nabla u)\|_{L^p(\mb R^n)}^p + \frac{\beta(n-sp)}{p} [u]_{s,p}^p = n\I{\mb R^n} F(u)\, dx,
	\end{align*}
    which is the required identity. \QED
    \textbf{Proof of Theorem \ref{t1.2}.} Recall $\Psi_\la(x)$ as defined in \eqref{psil}. Then, for any $(\alpha,\beta)\in S$, testing the equation \eqref{wksysdef1} with $\psi_\la\sum_{j=1}^{n}x_j D_j u$ and the equation \eqref{wksysdef2} with $\psi_\la\sum_{j=1}^{n}x_j D_j v$ and then adding the resulting equations, we get
    \begin{align*}
     I_1+I_2+I_{3,1}-I_{3,2}+(J_{1}+J_{2,1}-J_{2,2})+I_1^\prime+I_2^\prime+I_{3,1}^\prime-I_{3,2}^\prime+(J_{1}^\prime+J_{2,1}^\prime-J_{2,2}^\prime)=L^\prime,
    \end{align*}
    where the integrals involving $I$ and $J$ are the same as in the proof of Theorem \ref{t1.1}, the integrals involving $I^\prime$ and $J^\prime$ are the corresponding integrals with $v$ in place of $u$ and
    $$
L^\prime=\I{\mb R^n} \left((g_u(u,v) \Big(\sum\limits_{j=1}^n \psi_\lambda(x) x_j D_j u\Big) + g_v(u,v) \Big(\sum\limits_{j=1}^n \psi_\lambda(x) x_j D_j v\Big)\right)\, dx.
$$
Now the limit as $h \rightarrow 0$ of $I_1,\,I_2,\,I_{3,1}, J_1, J_2, J_{2,1},\,I_1^\prime,\,I_2^\prime,\,I_{3,1}^\prime,\,J_{1}^\prime,\text{ and }J_{2,1}^\prime$ can be evaluated on the similar lines of proof of Theorem \ref{t1.1}. Finally, we will estimate $I_{3,2}+ I_{3,2}^\prime+J_{2,2}+J_{2,2}^\prime$. For this, first we note that similar to the equations \eqref{e4.3}, \eqref{e4.9}, \eqref{e4.5} and \eqref{e4.10}, we also have
    \begin{align}\label{e4.23}
    D_j(H(\nabla v)^p)& -p H(\nabla v)^{p-1}\nabla_\xi H(\nabla v)\cdot D_j(\nabla v)
   \geq 0,
    \end{align}
\begin{align}\label{e4.24}
    D_j(|v(x)-v(y)|)^p-p|v(x)-v(y)|^{p-2}(v(x)-v(y))(D_j v(x)-D_j v(y) \geq 0,\text{ for every } x,y\in\mb R^n,
\end{align}
    \begin{align}\label{e4.25}
        \I{\mb R^n} D_j( H(\nabla v)^p) \,dx  = \I{\mb R^{n}} D_j g(u,v) \,dx =0,
        \end{align}
        and
\begin{align}\label{e4.26}
    \I{\mb R^n}\I{\mb R^n} {D_j(|v(x)-v(y)|^p)}\,d\mu_{s,p} =0.
\end{align}
Now choosing $\phi= \sum_{j=1}^n D_j u$ and $\psi = \sum_{j=1}^n D_jv$ as a test function in \eqref{wksysdef1} and \eqref{wksysdef2} respectively and combining the estimates \eqref{e4.3}, \eqref{e4.9}, \eqref{e4.5}, \eqref{e4.10}, \eqref{e4.23}, \eqref{e4.24}, \eqref{e4.25}, and \eqref{e4.26}, we have
\begin{equation*}
    \begin{split}
        \Big|&I_{3,2}+ J_{2,2}+ I_{3,2}^\prime + J_{2,2}^\prime\Big|\\
    \leq& C \sum\limits_{j=1}^n\left({\alpha} \I{\mb R^n}{\Big(}D_j(H(\nabla u)^p -p H(\nabla u)^{p-1} \nabla_\xi H(\nabla u) \cdot \nabla( D_ju ){\Big)}dx \right.\\
    &+ \beta\I{\mb R^n}\I{\mb R^n}{\Big(}{D_j(|u(x)-u(y)|^p)-p|u(x)-u(y)|^{p-2}(u(x)-u(y))(D_j u(x) - D_j u(y))}{\Big)}\,d\mu_{s,p}\\
    &+{\alpha} \I{\mb R^n}{\Big(}(D_j(H(\nabla v)^p -p H(\nabla v)^{p-1} \nabla_\xi H(\nabla v) \cdot \nabla( D_jv ){\Big)} dx \\
    &+\left. \beta\I{\mb R^n}\I{\mb R^n}{\Big(}{D_j(|v(x)-v(y)|^p)-p|v(x)-v(y)|^{p-2}(v(x)-v(y))(D_j v(x) - D_j v(y))}{\Big)}\,d\mu_{s,p}\right)\\
    =&Cp \sum\limits_{j=1}^n \I{\mb R^n}{\Big (}D_j g(u,v) -g_u(u,v) D_j u-g_v(u,v)D_jv{\Big)}\,dx\\
 =& Cp  \sum\limits_{j=1}^n\I{\mb R^n}((g_u(\tilde{u},\tilde{v})-g_u(u,v))D_j u + (g_v(\tilde{u},\tilde{v})-g_v(u,v))D_jv)\,dx,
    \end{split}
\end{equation*}
where again $C = C(\lambda)>0$ is a constant, $(\tilde{u},\tilde{v})$ lies between $(u,v)$ and $(u(\cdot + h e_j), v(\cdot+he_j))$. In view of H\"older's inequality, we have
\begin{equation}\label{e4.27}
    \begin{split}
       \Big|I_{3,2}&+ J_{2,2}+ I_{3,2}^\prime + J_{2,2}^\prime\Big|\\  \leq& C\sum\limits_{j=1}^n (\| g_u(\tilde{u},\tilde{v})-g_u(u,v)\|_{L^{p^\prime}(\mb R^n)} \|D_j u\|_{L^p(\mb R^n)} + \| g_v(\tilde{u},\tilde{v})-g_v(u,v)\|_{L^{p^\prime}(\mb R^n)} \|D_j v\|_{L^p(\mb R^n)})\\
       \leq& C\sum\limits_{j=1}^n (\| g_u(\tilde{u},\tilde{v})-g_u(u,v)\|_{L^{p^\prime}(\mb R^n)} \|\nabla u\|_{L^p(\mb R^n)} + \| g_v(\tilde{u},\tilde{v})-g_v(u,v)\|_{L^{p^\prime}(\mb R^n)} \|\nabla v\|_{L^p(\mb R^n)}),
    \end{split}
\end{equation}
 where $1/p + 1/p'=1$. Since $u( \cdot + he_j) \rightarrow u$ and $v( \cdot + he_j) \rightarrow v$  in $L^p(\mb R^n)$, we obtain $\tilde{u} \rightarrow u$ and $\tilde{v} \rightarrow v$ in $L^p(\mb R^n)$. Finally using $(g_1)$, we have $|g_u(\tilde{u}, \tilde{v})| \leq |\tilde{u}|^{p-1}+|\tilde{v}|^{p-1}$ and $|g_v(\tilde{u}, \tilde{v})| \leq |\tilde{u}|^{p-1}+|\tilde{v}|^{p-1}$ and since $|\tilde{u}|^{p-1} + |\tilde{v}|^{p-1} \ra |u|^{p-1} + |\tilde{v}|^{p-1}$ in $L^{p'}(\mathbb{R}^n)$, by the generalized Lebesgue's dominated convergence theorem, we have
    \begin{equation}\label{e4.28}
        \lim_{h\to 0}\|g_u(\tilde{u},\tilde{v})-g_u(u,v)\|_{{L^{p'}(\mathbb{R}^n)}}=0,
    \end{equation}
    and
    \begin{equation}\label{e4.29}
        \lim_{h\to 0}\|g_v(\tilde{u},\tilde{v})-g_v(u,v)\|_{{L^{p'}(\mathbb{R}^n)}}=0,
    \end{equation}
Using equations \eqref{e4.28}, \eqref{e4.29}, $\|\nabla u\|_{L^p(\mathbb{R}^n)} < \infty$, and $\|\nabla v\|_{L^p(\mathbb{R}^n)} < \infty$ in \eqref{e4.27}, we obtain 
\begin{equation*}
\lim_{h\to 0}(I_{3,2}+ J_{2,2}+ I_{3,2}^\prime + J_{2,2}^\prime)=0.
\end{equation*}
The rest of the proof follows by proceeding along the lines of the proof of Theorem \ref{t1.1} and taking into account the following identity:
	\begin{align*}
		\I{\mb R^n}\Big((\Psi_\la(x)\cdot \na u) g_u(u,v) &+ (\Psi_\la(x) \cdot \na v)g_v(u,v)\Big)\,dx = \I{\mb R^n} \Psi_\la(x) \cdot \na g(u,v)\,dx \\=& -\left(\I{\mb R^n} n\,\psi_\la(x) g(u,v)\,dx + \I{\mb R^n} g(u,v) x \cdot \na \psi_\la(x)\,dx \right).
	\end{align*}
	This completes the proof. \QED

\section{Applications}
In this section, we present some applications of the Pohozaev identities obtained in the previous section.
\begin{Proposition}\label{PNE}
    Let $0<s<1<p<n$ and $q>1$. Then the problem 
    \begin{align*}
   -\alpha H_p u + \beta (-\De_p)^s u=\lambda t_+^{q-1}-\mu t_-^{q-1}~\text{in}~\mb R^n,
\end{align*}
where $\la,\mu\in\mb R$ admits no non-trivial weak solution $u \in X_{\alpha,\beta} \cap L^\infty(\mb R^n) \cap L^q(\mb R^n)$ in the following cases
\begin{enumerate}
    \item $(\alpha,\beta)=(1,0)$ and $  q \ne p^\ast=\frac{np}{n-p} $,
    \item $(\alpha,\beta)=(0,1)$ and $  q \ne p_s^\ast=\frac{np}{n-sp} $,
    \item $(\alpha,\beta)=(1,1)$ and either $q \geq p^\ast$ or $q \leq p_s^\ast$.
\end{enumerate}
\end{Proposition}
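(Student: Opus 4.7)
The plan is to combine the Pohozaev identity of Theorem \ref{t1.1} with the energy identity obtained by testing the equation against $u$ itself. For $f(t)=\lambda t_+^{q-1}-\mu t_-^{q-1}$ one checks
\begin{equation*}
F(t)=\int_0^t f(\sigma)\,d\sigma=\frac{\lambda}{q}t_+^q+\frac{\mu}{q}t_-^q, \qquad tf(t)=\lambda t_+^q+\mu t_-^q=qF(t).
\end{equation*}
Since $u\in L^\infty(\mb R^n)$, the bound $|f(t)|\leq C|t|^{q-1}$ reduces to $|f(u)|\leq C'|u|^{p-1}$ on the range of $u$, so Corollary \ref{cor1} validates the use of Theorem \ref{t1.1} and yields
\begin{equation*}
\frac{\alpha(n-p)}{p}\|H(\na u)\|_{L^p(\mb R^n)}^p+\frac{\beta(n-sp)}{p}[u]_{s,p}^p=\frac{n}{q}\int_{\mb R^n}\big(\lambda u_+^q+\mu u_-^q\big)\,dx.
\end{equation*}

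Next I would derive the companion energy identity by using $u$ as a test function in Definition \ref{wksolequ}. Since $u$ need not be compactly supported, I would first approximate it by $u\eta_R$, where $\eta_R\in C_c^1(\mb R^n)$ satisfies $\eta_R\equiv 1$ on $B_R(0)$, $\supp(\eta_R)\subset B_{2R}(0)$, and $|\na \eta_R|\leq C/R$; using $u\in X_{\alpha,\beta}\cap L^\infty\cap L^q$ one verifies $u\eta_R\to u$ in the relevant (semi)norms by dominated convergence and a tail estimate, and passing to the limit $R\to\infty$ (after a standard mollification to reach $C_c^1$) yields
\begin{equation*}
\alpha\|H(\na u)\|_{L^p(\mb R^n)}^p+\beta[u]_{s,p}^p=\int_{\mb R^n}\big(\lambda u_+^q+\mu u_-^q\big)\,dx.
\end{equation*}
Eliminating the right-hand side between the two identities gives the key relation
\begin{equation*}
\alpha\Big(\frac{n-p}{p}-\frac{n}{q}\Big)\|H(\na u)\|_{L^p(\mb R^n)}^p+\beta\Big(\frac{n-sp}{p}-\frac{n}{q}\Big)[u]_{s,p}^p=0.
\end{equation*}

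The coefficients $\frac{n-p}{p}-\frac{n}{q}$ and $\frac{n-sp}{p}-\frac{n}{q}$ change sign across $q=p^\ast$ and $q=p_s^\ast$ respectively (positive above, negative below, zero at), and note $p_s^\ast<p^\ast$. In case $(\alpha,\beta)=(1,0)$ with $q\neq p^\ast$, the coefficient of $\|H(\na u)\|_{L^p}^p$ is nonzero, forcing $H(\na u)\equiv 0$; by (H3), $\na u\equiv 0$, so $u$ is constant and $u\in L^q(\mb R^n)$ gives $u\equiv 0$. The case $(\alpha,\beta)=(0,1)$ with $q\neq p_s^\ast$ is analogous, forcing $[u]_{s,p}=0$ and hence $u\equiv 0$. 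In the mixed case $(\alpha,\beta)=(1,1)$: for $q\geq p^\ast$ both coefficients are non-negative and the second is strictly positive (since $q>p_s^\ast$), so $[u]_{s,p}=0$; for $q\leq p_s^\ast$ both are non-positive and the first is strictly negative (since $q<p^\ast$), so $H(\na u)\equiv 0$. Either sub-case again yields $u\equiv 0$. The forbidden range $p_s^\ast<q<p^\ast$ is precisely where the two coefficients have opposite signs and no conclusion can be drawn from this identity alone, explaining the gap in part (3).

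The main obstacle is the density step needed to justify using $u$ itself as a test function. For the anisotropic term this reduces to the standard convergence $u\eta_R\to u$ in $W^{1,p}(\mb R^n)$ (using $u\in L^p$ on $|x|\geq R$); for the nonlocal term one must also verify $[u\eta_R]_{s,p}\to[u]_{s,p}$ and convergence of the relevant double integral against the Gagliardo kernel, which follows from dominated convergence thanks to $u\in W^{s,p}\cap L^\infty$ together with a standard commutator-type estimate on $u\eta_R-u$. Once the admissibility of $u$ as a test function is established, the remainder is purely arithmetic on the two critical exponents $p^\ast$ and $p_s^\ast$.
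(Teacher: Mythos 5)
Your proposal is correct and follows essentially the same strategy as the paper: combine the Pohozaev identity of Theorem \ref{t1.1} (using $u\in L^\infty$ to reduce $(f_2)$ to $(f_1)$) with the energy identity obtained by testing against $u$, note $tf(t)=qF(t)$, eliminate the common right-hand side, and read off the sign of the resulting coefficients in each of the three cases. The one addition you make beyond the paper is the explicit truncation/mollification argument justifying the use of $u\notin C_c^1$ as a test function and the remark that constancy plus $u\in L^q(\mb R^n)$ forces $u\equiv 0$; the paper passes over both silently, so this is a harmless but welcome bit of extra care rather than a different route.
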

 \begin{proof} 
    Let $f(t) = \lambda t_+^{q-1}-\mu t_-^{q-1}$ and $F(t) = \I{0}^t f(s) ds$. Suppose $u \in X_{\alpha,\beta} \cap L^\infty(\mb R^n) \cap L^q(\mb R^n)$ is a weak solution of \eqref{meqn}. Then $f$ satisfies $(f_1)$. Thus, by testing the equation \eqref{wksoldef} with $u$ itself and using Lemma \ref{Happ}, we get 
        \begin{equation}\label{a2eq1}
            \alpha\| H(\nabla u)\|_{L^p(\mb R^n)}^p + \beta [u]_{s,p}^p= \I{\mb R^n}uf(u) \,dx.
        \end{equation}
        Also, since $u  \in L^\infty(\mb R^n)$, we have $|f(u) | \leq C |u|^{p-1}$ for some $C>0$ depending on $\|u\|_{L^\infty(\mb R^n)}$. By observing $F(t) =\frac{tf(t)}{q}$, we obtain by Theorem \ref{t1.1} that 
        \begin{equation}\label{a2eq2}
            \frac{\alpha(n-p)}{np}\| H(\nabla u)\|_{L^p(\mb R^n)}^p + \frac{\beta(n-ps)}{np}[u]_{s,p}^p = \frac1q \I{\mb R^n}uf(u)dx.
       \end{equation}
       Now, let us consider the three cases separately.\\
   \textbf{Case 1.} $(\alpha,\beta)=(1,0)$ and $  q \ne p^\ast=\frac{np}{n-p} $.\\
       In this case, on comparing the equations \eqref{a2eq1} and \eqref{a2eq2}, we get
        \begin{align*}
            \frac{n-p}{np}\| H(\nabla u)\|_{L^p(\mb R^n)}^p = \frac1q \| H(\nabla u)\|_{L^p(\mb R^n)}^p.
        \end{align*}
        Since $q \ne p^\ast$, we get the conclusion.\\
         \textbf{Case 2.} $(\alpha,\beta)=(0,1)$ and $  q \ne p_s^\ast=\frac{np}{n-sp} $.\\
       In this case, on comparing the equations \eqref{a2eq1} and \eqref{a2eq2}, we get
        \begin{align*}
            \frac{n-sp}{np}[u]_{s,p}^p = \frac1q [u]_{s,p}^p.
        \end{align*}
        Since $q \ne p_s^\ast$, we get the conclusion.\\
        \textbf{Case 3.} $(\alpha,\beta)=(1,1)$ and either $  q \geq p^\ast $ or $q \leq p_s^\ast$.\\
       In this case, on comparing the equations \eqref{a2eq1} and \eqref{a2eq2}, we get
        \begin{align*}
      \left(\frac{n-p}{np}-\frac1q\right)\| H(\nabla u)\|_{L^p(\mb R^n)}^p +   \left(   \frac{n-sp}{np}-\frac1q\right)[u]_{s,p}^p &= 0.
      \end{align*}
        Now if $q \geq p^\ast > p_s^\ast$, then $\frac{n-p}{np}-\frac1q \geq 0$ and $\frac{n-sp}{np}-\frac1q > 0$, which implies $u \equiv 0$. Similarly, if $q \leq p_s^\ast < p^\ast$, then $\frac{n-p}{np}-\frac1q < 0$ and $\frac{n-sp}{np}-\frac1q \leq 0$, which again implies $u \equiv 0$. \QED
    \end{proof}
Finally, as an application of the Pohozaev identity for the systems, we give the following nonexistence results:
\begin{Proposition}\label{sysapp}
     Let $0<s<1<p<n$ and $q>1$. Then the system of equations
\begin{align*}
 -\alpha H_p u +  \beta (-\Delta_p)^s u= \lambda |u|^{q-2}u~\text{in}~\mb R^n,\\
 -\alpha H_p v +   \beta (-\Delta_p)^s v= \mu |v|^{q-2}v~\text{in}~\mb R^n,
\end{align*}
where $\lambda,\,\mu \in \mb R$, admits no non-trivial weak solution $(u,v) \in X \times X$, where $X = X_{\alpha,\beta} \cap L^\infty(\mb R^n) \cap L^q(\mb R^n)$ and $u,v>0$ in $\mb R^n$ or $u,v<0$ in $\mb R^n$ or  $u>0,v<0$ in $\mb R^n$ or  $u<0,v>0$ in $\mb R^n$, in the following cases
\begin{enumerate}
    \item $(\alpha,\beta)=(1,0)$ and $  q \ne p^\ast=\frac{np}{n-p} $,
    \item $(\alpha,\beta)=(0,1)$ and $  q \ne p_s^\ast=\frac{np}{n-sp} $,
    \item $(\alpha,\beta)=(1,1)$ and either $q \geq p^\ast$ or $q \leq p_s^\ast$.
\end{enumerate}
\end{Proposition}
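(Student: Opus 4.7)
\textbf{Proof plan for Proposition \ref{sysapp}.} The strategy is a direct system-level adaptation of the argument used for Proposition \ref{PNE}: combine the Pohozaev identity of Theorem \ref{t1.2} with an energy identity obtained by testing each equation of the system against its own solution, and then reduce to the same algebraic inequality that appears in the single equation case.

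First, I would identify the correct potential. Setting $g(u,v) = \frac{\lambda}{q}|u|^q + \frac{\mu}{q}|v|^q$ gives $g_u(u,v) = \lambda|u|^{q-2}u$ and $g_v(u,v) = \mu|v|^{q-2}v$, which match the right-hand sides of the system. For $q>1$ this $g$ is continuously differentiable; since $u,v \in L^\infty(\mb R^n)$ the growth bound in $(g_1)$ (or, equivalently, $(g_2)$ via Corollary \ref{cor1}) holds, and $g(u,v)\in L^1(\mb R^n)$ because $u,v\in L^q(\mb R^n)$. Hence the hypotheses of Theorem \ref{t1.2} are satisfied.

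Next, I would derive an energy identity by testing \eqref{wksysdef1} with $\phi=u$ and \eqref{wksysdef2} with $\psi=v$. Lemma \ref{Happ}(A) yields $H(\nabla u)^{p-1}\nabla_\xi H(\nabla u)\cdot\nabla u = H(\nabla u)^p$ (and similarly for $v$), so after adding the two tested equations one obtains
\begin{equation*}
\alpha\,\mc A + \beta\,\mc B \;=\; \lambda\I{\mb R^n}|u|^q\,dx + \mu\I{\mb R^n}|v|^q\,dx,
\end{equation*}
where $\mc A := \|H(\nabla u)\|_{L^p(\mb R^n)}^p + \|H(\nabla v)\|_{L^p(\mb R^n)}^p$ and $\mc B := [u]_{s,p}^p + [v]_{s,p}^p$. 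On the other hand Theorem \ref{t1.2} applied to $(u,v)$ gives
\begin{equation*}
\frac{\alpha(n-p)}{p}\mc A + \frac{\beta(n-sp)}{p}\mc B \;=\; n\I{\mb R^n}g(u,v)\,dx \;=\; \frac{n}{q}\!\left(\lambda\I{\mb R^n}|u|^q\,dx + \mu\I{\mb R^n}|v|^q\,dx\right).
\end{equation*}
Using the energy identity to eliminate the right-hand side and dividing by $n$, one reaches the key relation
\begin{equation*}
\alpha\!\left(\frac{n-p}{np}-\frac{1}{q}\right)\!\mc A \;+\; \beta\!\left(\frac{n-sp}{np}-\frac{1}{q}\right)\!\mc B \;=\; 0,
\end{equation*}
which is exactly the system analogue of the equation compared in the proof of Proposition \ref{PNE}.

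With this relation in hand, the three cases are handled by the same sign analysis as in Proposition \ref{PNE}. In case (1), $\beta=0$ and the coefficient of $\mc A$ is nonzero (since $q\ne p^\ast$), forcing $\mc A=0$, so $\nabla u=\nabla v=0$ a.e., and the $L^q$ condition then forces $u=v=0$; case (2) is analogous, extracting $\mc B=0$ and using that the only constant in $W^{s,p}(\mb R^n)\cap L^q(\mb R^n)$ whose Gagliardo seminorm vanishes is zero. In case (3), with $\alpha=\beta=1$, when $q\ge p^\ast$ both bracketed coefficients are nonnegative and the one multiplying $\mc B$ is strictly positive (since $p^\ast>p^\ast_s$), giving $\mc B=0$ and thus $u\equiv v\equiv 0$; when $q\le p^\ast_s$ both coefficients are nonpositive with the one on $\mc A$ strictly negative, yielding $\mc A=0$ and the same conclusion. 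The main delicate point is verifying the applicability of Theorem \ref{t1.2} to this specific $g$ (that is, that the $L^\infty$ assumption on $(u,v)$ reduces $(g_2)$ to $(g_1)$ and that $g(u,v)\in L^1$); the rest is the clean algebraic argument inherited from the scalar case, and the sign assumptions on $u,v$ play no role beyond guaranteeing membership in the required function spaces.
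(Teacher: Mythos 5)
Your proof is correct and follows essentially the same route as the paper: identify the scalar potential $g(t,s)=\tfrac{1}{q}(\lambda|t|^q+\mu|s|^q)$, obtain the energy identity by testing \eqref{wksysdef1} with $\phi=u$ and \eqref{wksysdef2} with $\psi=v$, apply Theorem \ref{t1.2} (using $g(u,v)=\tfrac{1}{q}(ug_u(u,v)+vg_v(u,v))$), subtract to reach
\[
\alpha\Bigl(\tfrac{n-p}{np}-\tfrac1q\Bigr)\mc A+\beta\Bigl(\tfrac{n-sp}{np}-\tfrac1q\Bigr)\mc B=0,
\]
and then run the same three-case sign analysis. The one small discrepancy is your closing remark that the sign hypotheses on $u,v$ ``play no role''; the paper's own proof does lean on $u,v>0$ (reducing the other sign patterns to this one via $-u$, $(H2)$ and Lemma \ref{Happ}) and invokes it to justify continuity of $g_u(u,v),g_v(u,v)$ and the $L^1$ membership. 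You are right that for this particular $g$ these facts hold for all $q>1$ regardless of sign (since $t\mapsto|t|^{q-2}t$ is already continuous at $0$ once $q>1$, and $g(u,v)\in L^1$ only needs $u,v\in L^q$), so your observation is defensible, but it should be flagged as a deliberate deviation from the authors' stated argument rather than presented as though the hypothesis were vacuous in their proof.
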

\begin{proof}
Suppose $(u,v) \in X \times X$ is a weak solution of \eqref{msys}. We prove the result by assuming both $u,v>0$ in $\mb R^n$. The proof in the other cases are similar by working with $-u$ if $u<0$ in $\mb R^n$ and by taking into account the property (H2) and Lemma \ref{Happ}.

   Let $g(t,s) = \frac{\lambda |t|^{q} + \mu |s|^{q}}{q}$. Since $u,v>0$ in $\mb R^n$, we have $g_u(u,v)$ and $g_v(u,v)$ are continuous and $g_u(u,v)u \in L^1(\mb R^n)$ and $g_v(u,v)v \in L^1(\mb R^n)$. 
   Thus, by taking $\phi =u$ in \eqref{wksysdef1} and $\psi =v$ in \eqref{wksysdef2} with  and using Lemma \ref{Happ}, we get 
     \begin{equation}\label{a2eq11}
           \alpha(\| H(\nabla u)\|_{L^p(\mb R^n)}^p + \| H(\nabla v)\|_{L^p(\mb R^n)}^p) +\beta ( [u]_{s,p}^p + [v]_{s,p}^p) = \I{\mb R^n}(ug_u(u,v) + vg_v(u,v)) \,dx.
       \end{equation}
        Also, since $u, v \in L^\infty(\mb R^n)$, we have $|g_u(u,v)| + |g_v(u,v)| \leq C (|u|^{p-1}+|v|^{p-1})$  for some $C>0$ depending on $\|u\|_{L^\infty(\mb R^n)}$ and $\|v\|_{L^\infty(\mb R^n)}$. By observing $g(u,v)= \frac{ug_u(u,v) + v g_v(u,v)}{q}$, we obtain by Theorem \ref{t1.2} that 
       \begin{equation}\label{a2eq22}
           \frac{\alpha(n-p)}{np}\left(\| H(\nabla u)\|_{L^p(\mb R^n)}^p + \| H(\nabla v)\|_{L^p(\mb R^n)}^p\right) + \frac{\beta(n-ps)}{np}\left([u]_{s,p}^p +[v]_{s,p}^p \right) = \frac1q \I{\mb R^n} (ug_u(u,v) + vg_v(u,v)) \,dx.
       \end{equation}
    Now, let us consider the three cases separately.\\
   \textbf{Case 1.} $(\alpha,\beta)=(1,0)$ and $  q \ne p^\ast=\frac{np}{n-p} $.\\
       In this case, on comparing the equations \eqref{a2eq11} and \eqref{a2eq22}, we get
        \begin{align*}
            \frac{n-p}{np}\left(\| H(\nabla u)\|_{L^p(\mb R^n)}^p + \| H(\nabla v)\|_{L^p(\mb R^n)}^p \right)   = \frac1q \left(\| H(\nabla u)\|_{L^p(\mb R^n)}^p +\| H(\nabla u)\|_{L^p(\mb R^n)}^p \right).
        \end{align*}
        Since $q \ne p^\ast$, we get the conclusion.\\
         \textbf{Case 2.} $(\alpha,\beta)=(0,1)$ and $  q \ne p_s^\ast=\frac{np}{n-sp} $.\\
       In this case, on comparing the equations \eqref{a2eq11} and \eqref{a2eq22}, we get
        \begin{align*}
            \frac{n-sp}{np}([u]_{s,p}^p + [v]_{s,p}^p) = \frac1q ([u]_{s,p}^p + [v]_{s,p}^p).
        \end{align*}
        Since $q \ne p_s^\ast$, we get the conclusion.\\
        \textbf{Case 3.} $(\alpha,\beta)=(1,1)$ and either $  q \geq p^\ast $ or $q \leq p_s^\ast$.\\
       In this case, on comparing the equations \eqref{a2eq11} and \eqref{a2eq22}, we get
        \begin{align*}
      \left(\frac{n-p}{np}-\frac1q\right)(\| H(\nabla u)\|_{L^p(\mb R^n)}^p +H(\nabla v)\|_{L^p(\mb R^n)}^p ) +   \left(   \frac{n-sp}{np}-\frac1q\right)([u]_{s,p}^p +[v]_{s,p}^p) &= 0.
      \end{align*}
        Now if $q \geq p^\ast > p_s^\ast$, then $\frac{n-p}{np}-\frac1q \geq 0$ and $\frac{n-sp}{np}-\frac1q > 0$, which implies $u,\,v \equiv 0$. Similarly, if $q \leq p_s^\ast < p^\ast$, then $\frac{n-p}{np}-\frac1q < 0$ and $\frac{n-sp}{np}-\frac1q \leq 0$, which again implies $u,\,v \equiv 0$. \QED
\end{proof}

 \begin{Remark}
        In particular, if $p =q$ in Proposition \ref{PNE}, then $f$ satisfies $(f_1)$ for any weak solution $u\in X_{\alpha,\beta}$ of \eqref{meqn}. Therefore, under the hypotheis of Proposition \ref{PNE}, we conclude that the eigenvalue problem 
         \begin{align*}
        -\alpha H_p u + \beta (-\Delta_p)^s u = \lambda u_+^{p-1} - \mu u_-^{p-1}~\text{in}~\mb R^n
    \end{align*}
    where $\lambda ,\mu \in \mb R$, has no non-trivial weak solution in $X_{\alpha,\beta}$. Moreover,  if $p =q$ in Proposition \ref{sysapp}, then $g$ satisfies $(g_1)$ for any weak solution $(u,v)\in X_{\alpha,\beta}\times X_{\alpha,\beta}$.  Then under the hypotheis of Proposition \ref{sysapp},  the system of equations
\begin{align*}
 -\alpha H_p u +  \beta (-\Delta_p)^s u= \lambda |u|^{p-2}u~\text{in}~\mb R^n,\\
 -\alpha H_p v +   \beta (-\Delta_p)^s v= \mu |v|^{p-2}v~\text{in}~\mb R^n,
\end{align*}
where $\lambda,\,\mu \in \mb R$, admits no non-trivial weak solution $(u,v) \in X_{\alpha,\beta}  \times X_{\alpha,\beta} $ where $u,v>0$ in $\mb R^n$ or $u,v<0$ in $\mb R^n$ or  $u>0,v<0$ in $\mb R^n$ or  $u<0,v>0$ in $\mb R^n$.
\end{Remark}

 \section*{Acknowledgement} P. Garain thanks IISER Berhampur for the Seed grant: IISERBPR/RD/OO/2024/15, Date: February 08, 2024. A part of this work was completed when the first author was a PDRF at IISER Berhampur. The first author wants to thank IISER Berhampur for its support and hospitality.


\begin{thebibliography}{10}

\bibitem{Amb21}
Vincenzo Ambrosio.
\newblock {\em Nonlinear fractional {S}chr\"odinger equations in {$\Bbb R^N$}}.
\newblock Frontiers in Elliptic and Parabolic Problems. Birkh\"auser/Springer, Cham, [2021] \copyright 2021.

\bibitem{Ambrosio}
Vincenzo Ambrosio.
\newblock On the {P}ohozaev identity for the fractional {$p$}-{L}aplacian operator in {$\Bbb R^N$}.
\newblock {\em Bull. Lond. Math. Soc.}, 56(6):1999--2013, 2024.

\bibitem{AGS}
Gurdev~C. Anthal, Jacques Giacomoni, and Konijeti Sreenadh.
\newblock A {C}hoquard type equation involving mixed local and nonlocal operators.
\newblock {\em J. Math. Anal. Appl.}, 527(2):Paper No. 127440, 27, 2023.

\bibitem{Q}
Qiqi Bao.
\newblock Local-nonlocal {S}chr\"odinger equation with critical exponent: the zero mass case.
\newblock {\em Bull. Malays. Math. Sci. Soc.}, 47(5):Paper No. 163, 28, 2024.

\bibitem{BFKzamp}
M.~Belloni, V.~Ferone, and B.~Kawohl.
\newblock Isoperimetric inequalities, {W}ulff shape and related questions for strongly nonlinear elliptic operators.
\newblock {\em Z. Angew. Math. Phys.}, 54(5):771--783, 2003.
\newblock Special issue dedicated to Lawrence E. Payne.

\bibitem{biswas2024pohozaevidentitymixedlocalnonlocal}
Anup Biswas.
\newblock The {P}ohozaev identity for mixed local-nonlocal operator, 2024.

\bibitem{Med}
Yuri Bozhkov and Enzo Mitidieri.
\newblock The {N}oether approach to {P}ohozhaev's identities.
\newblock {\em Mediterr. J. Math.}, 4(4):383--405, 2007.

\bibitem{B-N}
Ha\"im Br\'ezis and Louis Nirenberg.
\newblock Positive solutions of nonlinear elliptic equations involving critical {S}obolev exponents.
\newblock {\em Comm. Pure Appl. Math.}, 36(4):437--477, 1983.

\bibitem{Xiathesis}
Xia C.
\newblock {\em On a class of anisotropic problems}.
\newblock Dissertation zur Erlan-gung des Doktorgrades der Fakultät Mathematik und Physik der Albert-Ludwigs-Universität Freiburgim Breisgau, 2012.

\bibitem{Chang}
X.~Chang and Z.-Q. Wang.
\newblock Ground state of scalar field equations involving a fractional {L}aplacian with general nonlinearity.
\newblock {\em Nonlinearity}, 26(2):479--494, 2013.

\bibitem{M-A-Squa}
Marco Degiovanni, Alessandro Musesti, and Marco Squassina.
\newblock On the regularity of solutions in the {P}ucci-{S}errin identity.
\newblock {\em Calc. Var. Partial Differential Equations}, 18(3):317--334, 2003.

\bibitem{Hitchhikersguide}
Eleonora Di~Nezza, Giampiero Palatucci, and Enrico Valdinoci.
\newblock Hitchhiker's guide to the fractional {Sobolev} spaces.
\newblock {\em Bull. Sci. Math.}, 136(5):521--573, 2012.

\bibitem{Weth}
Sidy~Moctar Djitte, Mouhamed Moustapha, and Tobias Weth.
\newblock A generalized fractional {P}ohozaev identity and applications.
\newblock {\em Adv. Calc. Var.}, 17(1):237--253, 2024.

\bibitem{LC}
Lawrence~C Evans.
\newblock {\em Partial differential equations}, volume~19.
\newblock American Mathematical Society, 2022.

\bibitem{PF20}
Csaba Farkas and Patrick Winkert.
\newblock An existence result for singular {Finsler} double phase problems.
\newblock {\em J. Differ. Equations}, 286:455--473, 2021.

\bibitem{G-V}
Mohammed Guedda and Laurent V\'eron.
\newblock Quasilinear elliptic equations involving critical {S}obolev exponents.
\newblock {\em Nonlinear Anal.}, 13(8):879--902, 1989.

\bibitem{Heinonen}
Juha Heinonen, Tero Kilpel\"ainen, and Olli Martio.
\newblock {\em Nonlinear potential theory of degenerate elliptic equations}.
\newblock Dover Publications, Inc., Mineola, NY, 2006.
\newblock Unabridged republication of the 1993 original.

\bibitem{N-N-S}
Nichiro Kawano, Wei-Ming Ni, and Shoji Yotsutani.
\newblock A generalized {P}ohozaev identity and its applications.
\newblock {\em J. Math. Soc. Japan}, 42(3):541--564, 1990.

\bibitem{Korman}
Philip Korman.
\newblock Pohozaev's identity and non-existence of solutions for elliptic systems.
\newblock {\em Comm. Appl. Nonlinear Anal.}, 17(4):81--88, 2010.

\bibitem{LLjmaa}
Jiaquan Liu and Xiangqing Liu.
\newblock On the eigenvalue problem for the {$p$}-{L}aplacian operator in {$\Bbb R^N$}.
\newblock {\em J. Math. Anal. Appl.}, 379(2):861--869, 2011.

\bibitem{MFL}
Pei Ma, Feng~Quan Li, and Yan Li.
\newblock A {P}ohozaev identity for the fractional {H}\'enon system.
\newblock {\em Acta Math. Sin. (Engl. Ser.)}, 33(10):1382--1396, 2017.

\bibitem{MV}
I.~I. Mezei and O.~Vas.
\newblock Existence results for some {Dirichlet} problems involving {Finsler}-{Laplacian} operator.
\newblock {\em Acta Math. Hung.}, 157(1):39--53, 2019.

\bibitem{Sciani}
Luigi Montoro and Berardino Sciunzi.
\newblock Pohozaev identity for {F}insler anisotropic problems.
\newblock {\em NoDEA Nonlinear Differential Equations Appl.}, 30(3):Paper No. 33, 15, 2023.

\bibitem{Pohozaev}
S.~I. Poho\v{zaev}.
\newblock On the eigenfunctions of the equation {$\Delta u+\lambda f(u)=0$}.
\newblock {\em Dokl. Akad. Nauk SSSR}, 165:36--39, 1965.

\bibitem{P-S}
Patrizia Pucci and James Serrin.
\newblock A general variational identity.
\newblock {\em Indiana Univ. Math. J.}, 35(3):681--703, 1986.

\bibitem{Oton1}
Xavier Ros-Oton and Joaquim Serra.
\newblock Fractional {L}aplacian: {P}ohozaev identity and nonexistence results.
\newblock {\em C. R. Math. Acad. Sci. Paris}, 350(9-10):505--508, 2012.

\bibitem{Oton2}
Xavier Ros-Oton and Joaquim Serra.
\newblock The {P}ohozaev identity for the fractional {L}aplacian.
\newblock {\em Arch. Ration. Mech. Anal.}, 213(2):587--628, 2014.

\bibitem{WXia}
Qiaoling Wang and Changyu Xia.
\newblock The {P}ohozaev identity for the anisotropic {$p$}-{L}aplacian and estimates of the torsion function.
\newblock {\em Rev. Mat. Iberoam.}, 36(4):1257--1278, 2020.

\end{thebibliography}
\end{document}